\newenvironment{proof}{\noindent{\em \textbf{Proof.}}}{\quad \hfill$\Box$\vspace{2ex}}
\newtheorem{theorem}{Theorem}[section]
\newtheorem{lemma}[theorem]{Lemma}
\numberwithin{equation}{section}
\def \T {\mathbb{T}}
\def\i { \bm {i}}
\def\sgn{\mbox{sgn}}
\newcommand{\norm}[1]{\left\lVert#1\right\rVert}
\newcommand{\abs}[1]{\left|#1\right|}
\title{Multichannel reconstruction from nonuniform samples with application to image recovery}
	\author{Dong Cheng\thanks{chengdong720@163.com} }
	\author{Kit Ian Kou\thanks{kikou@umac.mo}}
	\affil{\normalsize{Department of Mathematics, Faculty of Science and Technology, University of Macau, Macao, China}}
	\date{}
\begin{document}
  \maketitle
\begin{abstract}
\normalsize
The multichannel trigonometric reconstruction from uniform samples was proposed recently.  It not only makes use of multichannel information about the signal but is also capable to generate various kinds of interpolation formulas according to the types and amounts of the collected samples. The paper presents the theory of multichannel interpolation from nonuniform samples.  Two distinct models of nonuniform sampling patterns are considered, namely recurrent and generic nonuniform sampling. Each model involves two types of samples: nonuniform samples of the observed signal and its derivatives.  Numerical examples and quantitative error analysis are provided to demonstrate the effectiveness of the proposed algorithms. Additionally, the proposed algorithm for recovering highly corrupted images is also investigated. In comparison with the median filter and correction operation treatment, our approach  produces superior results with lower errors.
\end{abstract}

 \begin{keywords}
Interpolation, nonuniform sampling, FFT, trigonometric polynomial, error analysis, derivative, image recovery.
\end{keywords}

\begin{msc}
 42A15,  94A12,	65T50, 94A08.
\end{msc}

\section{Introduction}\label{S1}
Sampling and reconstruction are used as fundamental tools in data processing and communication systems.
Classical uniform sampling \cite{zayed1993advances} is an effective tool to recover signals and has been applied in many applications.
However there are various instances that reconstruction of signals from their nonuniform
samples are required, such as computed tomography \cite{sidky2008image}, magnetic resonance \cite{liang2000principles} and radio astronomy \cite{burke2009introduction}.
Numerous approaches have been proposed in the literature to reconstruct bandlimited signals from nonuniform samples
\cite{yen1956nonuniform,yao1967some,jerri1977shannon,feichtinger1992irregular}. A widely known nonuniform sampling theorem \cite{zayed1993advances,seip1987an} may be stated as follows. Let $\{t_n\}_{n\in \mathbb{Z}}$ be a sequence of real numbers such that $\abs{t_n-\frac{n\pi}{\sigma}}<\frac{\pi}{4\sigma}$, then a $\sigma$-bandlimited function can be reconstructed by
\begin{equation}\label{Rnonuniform}
f(t)=\sum_{n=-\infty}^{\infty}f(t_n)S_n(t)
\end{equation}
where
\begin{align*}
	S_n(t)=\frac{G(t)}{G'(t_n)(t-t_n)},\quad  G(t)=(t-t_0)\prod_{k=1}^{\infty}\left(1-\frac{t}{t_k} \right) \left(1-\frac{t}{t_{-k}} \right)
\end{align*}
and  (\ref{Rnonuniform}) converges uniformly on any compact subset of $\mathbb{R}$. The series
(\ref{Rnonuniform})  is an extension of  Lagrange interpolation. Unlike the uniform sampling case,  (\ref{Rnonuniform})  contains infinitely many terms and the interpolating functions $S_n(t)$ involve  complicated components. These factors bring difficulties for exact reconstruction of a bandlimited function from nonuniform samples. To give a simpler approximation for reconstructing a bandlimited function from nonuniform samples, the authors in \cite{Maymon2011sinc} proposed a new kind of sinc interpolation method and they restricted $S_n(t)$ in (\ref{Rnonuniform}) to be of the   form $\mathrm{sinc}[\sigma(t-\tilde{t}_n)]$, 
where $\tilde{t}_n=nT+ \zeta_n$ and $\zeta_n$ is a sequence of random variables independent of $G(t)$. 
This restriction guarantees that the interpolating functions only consist of translation of sinc function, just like most cases of uniform interpolation \cite{higgins2000sampling,liu2010new,cheng2017novel,chengkou2018generalized}.  However, the restriction strategy simplifies reconstruction problem but introduces error inevitably.  To overcome the error, several methods for determining suitable $\tilde{t}_n$
were analyzed \cite{Maymon2011sinc}. By the similar idea, the sinc interpolation for nonuniform samples in fractional Fourier domain was studied in \cite{XU2016311}.

In a real application, there are only finitely many samples, albeit with large amount, are given in a bounded region. Interpolating by sinc functions or any other bandlimited functions has some limitations. On the one hand, the bandlimited interpolating functions cannot be time limited by the uncertainty principle
, thereby the approximation error is introduced.  On the other hand, the interpolating functions $S_n(t)$ for nonuniform samples are complicated and there is no closed form in general. Therefore, treating a finite amount of data as samples of a periodic function is  a convenient and  feasible choice \cite{margolis2008nonuniform}. As we know, sines and cosines are classical periodic functions, they have wide applications in modern science. It is no exaggeration to say that trigonometry pervades the area of signal processing. We know that $\{e^{\i nt}: n\in \mathbb{Z}\}$ is an orthogonal system and is complete in square integrable functions space on unit circle $\mathbb{T}$, i.e., ${L}^2(\mathbb{T})$. Besides, these   functions possess  elegant symmetries
and concise  frequency  meanings. These desirable properties of trigonometric functions could make interpolation much simpler and more effective \cite{margolis2008nonuniform,NAVASCUES2018}. In fact, in the early 1841,  Cauchy first proved a sampling interpolation theorem on trigonometric polynomials \cite{cauchy1841memoire}. It may be recognized as the headstream of sampling theory \cite{benedetto2012modern}. Cauchy's result states that  if $f(t)=\sum_{\abs{n}\leq M} c_ne^{2\pi \i tn}$, then it can be written as a sum of its sampled values $f(\frac{k}{2M+1})$,   $ 0\leq k\leq 2M$, each multiplied by a interpolating function. That is,
\begin{equation*}
	f(t) =\frac{1}{2M+1}\sum_{k=0}^{2M} f(\frac{k}{2M+1})\frac{(-1)^k \sin \pi (2M+1) t}{\sin \pi(t-\frac{k}{2M+1})}.
\end{equation*}
Certain studies  have been given to the problem of interpolating finite length samples by trigonometric functions or discrete Fourier transform.
In a series of papers \cite{schanze1995sinc,candocia1998comments,dooley2000notes},    the sinc interpolation of discrete periodic signals were extensively discussed. Although   referred to as sinc interpolation, the resulting interpolating functions are trigonometric. In \cite{jacob2002sampling}, the authors decomposed a periodic signal in a basis of shifted and scaled versions of a generating function. Moreover,  an error analysis for the  approximation method was also addressed.  A generalized trigonometric interpolation was considered in \cite{NAVASCUES2018} to make a good approximation for non-smooth functions.   Recently, the nonuniform sampling theorems for trigonometric polynomials were   presented \cite{margolis2008nonuniform,xiao2013sampling}. Selva \cite{selva2015fft} proposed a FFT-based interpolation of nonuniform samples. However, this method  is valid  only for nonuniform samples lying in a regular grid rather than  for  non-uniformly distributed data in  the general sense.

In all of above mentioned interpolation methods for finite length discrete points, only the samples of original function are processed. As an extension of trigonometric interpolation,
a multichannel interpolation of finite length samples was suggested in \cite{chengkou2018multi1}.
This novel method  makes good use of multifaceted information (such as derivatives, Hilbert transform) of function  and is capable of generating various useful interpolation formulas by selecting suitable parameters according to   the types and  amount of collected data.
In addition, it can be used to approximate some   integral transformations (such as Hilbert transform). A fast algorithm based on FFT makes multichannel interpolation more effective and stable. However, only the cases of uniform sampling were considered in \cite{chengkou2018multi1}.  There is a need to extend multichannel interpolation such that non-uniformly distributed data can be processed.

The purpose of this paper is  to establish the  theory of multichannel interpolation for
non-uniformly distributed data.  We will consider  two kinds of nonuniform sampling patterns: recurrent and generic nonuniform sampling. Meanwhile, each kind of  nonuniform sampling
involves two types of samples: its own nonuniform samples and derivative's samples.
There are four nonuniform  interpolation formulas will be analyzed. All closed-form expressions of interpolating functions are derived. Some examples are also demonstrated. We show that the trigonometric polynomial (also called periodic bandlimited function) of finite order can be exactly reconstructed by the proposed interpolation formulas provided that the total number of samples is enough.  Error analysis of the reconstruction for non-bandlimited square integrable functions are analyzed. Concretely,  the contributions of this paper may be summarized as follows:
\begin{enumerate}
	\item  We propose four types of interpolation formulas for non-uniformly distributed data. The proposed formulas involves not only samples of   $f$ but also samples of  $f'$, where $f$ is the function to be reconstructed. If the given data is sampled from a periodic bandlimited function, then we   arrive at a perfect reconstruction provided that the amount of data is larger than the bandwidth.
	\item  We analyze the error that arise in  reconstructing a non-bandlimited function by the proposed formulas. In particular, a comparison of performance on reconstructing    square integrable functions (not necessarily to be bandlimited) by these  formulas is made.
	\item  Applying the proposed interpolation formulas, we develop   algorithms for the recovery of damaged pixels which are non-uniformly located in  a degraded image.
	The algorithms   perform  well and can be efficiently implemented. Thus they could be   good pre-processing methods for some more sophisticated   approaches (such as deep learning) in the  image recovery problem.
\end{enumerate}

This paper is organized as follows.  In Section \ref{S2} some preparatory knowledge of Fourier series and multichannel interpolation are reviewed. Section \ref{S3} and \ref{S4} formulate four types of interpolation formulas for non-uniformly distributed data. The numerical examples and error analysis are presented in Section \ref{S5}. The application of proposed interpolation method to image recovery is shown in Section \ref{S6}. Finally, conclusion will be drawn in Section \ref{S7}.

\section{Preliminaries}\label{S2}

\subsection{Fourier series}

Without loss of generality, we will consider the functions defined on unit circle $\mathbb{T}$. Let $L^2(\mathbb{T})$ be the totality of square integral functions defined on $\mathbb{T}$. It is known that
$L^2(\mathbb{T})$ is a Hilbert space embedded with the inner product
\begin{equation*}
	(f,h):= \frac{1}{2\pi} \int_{\T}f(t)\overline{h(t)}dt,~ ~~\forall f, h \in L^2(\T).
\end{equation*}
For $f\in L^2(\mathbb{T})$, it can be expanded as
\begin{equation*}
	f(t)=\sum_{n\in \mathbb{Z}} a(n) e^{\i nt}
\end{equation*}
where the Fourier series is convergent to $f$ in $L^2$ norm. The general version of Parseval's identity is of the form
\begin{equation*}
	(f,h) =\sum_{n\in \mathbb{Z}}a(n) \overline{b(n)},
\end{equation*}
where $\{a(n)\}$ and $\{b(n)\}$ are Fourier coefficients of $f$ and $h$ respectively. The convolution theorem  manifests as
\begin{equation*}
	(f*h)(t)  :=\frac{1}{2\pi} \int_{\T}f(s)h(t-s)ds=\sum_{n\in \mathbb{Z}}a(n) b(n) e^{\i nt}.
\end{equation*}

The circular Hilbert transform \cite{king2009hilbert,mo2015afd}  is an useful tool in harmonic analysis and signal processing. It is defined by the singular integral
\begin{equation*}
	\mathcal{H}f(t) :=\frac{1}{2\pi}\mathrm{p.v.}\int_{\T}
	f(s)\cot\left(\frac{t-s}{2}\right)ds  =\sum_{n\in \mathbb{Z}}(-\i\sgn(n))a(n) e^{\i n t}
\end{equation*}
where $\sgn$ is the signum function taking values $1$, $-1$ or $0$ for $n>0$, $n<0$ or $n=0$ respectively.
From the definition, we see that it is   simple and straightforward  to compute Hilbert transform for trigonometric functions. Thus trigonometry-based interpolation  can be availably used to approximate Hilbert transform as well.

\subsection{Multichannel interpolation}

Multichannel interpolation proposed in \cite{chengkou2018multi1} is about the reconstruction problem of finite order  trigonometric polynomials. To maintain consistent terminology with the classical case, in what follows, a finite order  trigonometric polynomial is called a periodic bandlimited function, or briefly a bandlimited function.  Let $\mathbf{N}=(N_1,N_2)\in \mathbb{Z}^2$, in the sequel we denote by  $B_{\mathbf{N}}$ the totality of bandlimited functions with the following form:
\begin{equation*}
	f(t)=\sum_{n\in I^{\mathbf{N}}}a(n)e^{\i nt}  ,~~~I^{\mathbf{N}}=\{n: N_1\leq n \leq N_2\}.
\end{equation*}
The bandwidth of $f$ is defined by   the  cardinality of $I^{\mathbf{N}}$, denoted by $\mu(I^{\mathbf{N}})$.

For $1\leq m \leq M$, let
\begin{align}
	h_m(t)  &=\sum_{n\in \mathbb{Z}}b_{m}(n)e^{\i  nt}, \label{sysfunction}  \\
	g_m(t)   & = (f * h_m)(t) =  \frac{1}{2\pi} \int_{\T}f(s)h_m(t-s)ds.\nonumber
\end{align}
Suppose that $\frac{N_2-N_1+1}{M}=K\in \mathbb{N}^{+}$, we cut $I^{\mathbf{N}}$ into pieces as
$ I^{\mathbf{N}}=\bigcup_{j=1}^M I_j$, where
\begin{equation*}
	I_j=\{n: N_1+(j-1)K\leq n\leq N_1+jK-1\}.
\end{equation*}
The multichannel interpolation indicates that
a bandlimited function $f\in B_{\mathbf{N}}$ can be reconstructed by samples of $g_m$, namely,
\begin{equation}\label{drictexpression}
f(t)=  \frac{1}{K} \sum_{m=1}^{M} \sum_{p=0}^{K-1}g_m(\frac{2\pi p}{K}) y_m(t-\frac{2\pi p}{K})
\end{equation}
provided that $M\times M$ matrix $\mathbf{H}_n =\left[ b_m(n+jK-K)\right]_{jm}$  is invertible for every  $n\in I_1$. Here, the interpolating functions are constructed by the elements of $\mathbf{H}_n^{-1}$.  We denote the inverse matrix as
\begin{equation*}
	\mathbf{H}_n^{-1}= \begin{bmatrix}
		q_{11} (n) & q_{12} (n)&\cdots &q_{1M}(n)\\
		q_{21}(n) & q_{22}(n) &\cdots &q_{2M} (n)\\
		\vdots&\vdots& ~ & \vdots \\
		q_{M1} (n)& q_{M2}(n) &\cdots &q_{MM}(n)
	\end{bmatrix}.
\end{equation*}
The interpolating function $y_m$ for $1\leq m \leq M$ is given by
\begin{equation*}
	y_m(t)= \sum_{n\in I^{\mathbf{N}}}r_{m}(n)e^{\i n t}
\end{equation*}
where
\begin{equation*}
	r_{m}(n)= \begin{cases}
		q_{mj}(n+K-jK),&  \text{if}~n\in I_j, ~j=1,2,\cdots,M ,  \\
		0 & \text{if}~ n\notin I^{\mathbf{N}}.
	\end{cases}
\end{equation*}

In the following sections, using the powerful technique of multichannel interpolation, we  present four types of nonuniform interpolation formulas. Since there are some similar concepts involved in the following parts,  several notations may appear repeatedly with minor difference. We particularly remark that a notation could have different meanings across different parts.

\section{Multichannel  interpolation of  recurrent non-uniformly distributed data}\label{S3}

The recurrent nonuniform sampling often arises in time-interleaved analog-to digital converting process
\cite{strohmer2006fast,Maymon2011sinc}. As for recurrent nonuniform sampling, a classical result that   has to be mentioned  is the Papoulis' generalized sampling expansion (GSE) \cite{papoulis1977generalized}. The differences between GSE and the multichannel interpolation are mainly as follows:
\begin{itemize}
	\item   The GSE involves infinite summation and is applied to recovering functions defined on whole real line. Therefore the truncation is inevitable in practice. The multichannel interpolation is about reconstructing a finite length function from a finite number of samples.
	\item  There is a FFT-based fast algorithm to  implement the multichannel interpolation. The implementation of GSE is more complicated.
	\item   The multichannel interpolation can be extended to the generic nonuniform sampling case (see Section \ref{S4}). However, to the authors' knowledge, there is no generic nonuniform sampling formula based on GSE.
\end{itemize}

In the following two subsections, based on multichannel interpolation technique, we derive two interpolation formulas associated with recurrent nonuniform samples: one  concerns  derivative of function and the other does not.
Throughout Section \ref{S3}, let $m_0\in \mathbb{N}^+$ and $t_p=\frac{2\pi p}{m_0}$ for $p=0,1,\dots,m_0-1$.

\subsection{Recurrent nonuniform samples}

By setting $b_1(n)=1$, $b_2(n)=e^{\i n \alpha}$ with $0<\alpha<\frac{2\pi}{m_0}$ in (\ref{sysfunction}) and applying multichannel interpolation, it is easy to have the interpolation formula for  recurrent non-uniformly distributed data:
\begin{equation}\label{recurrent_nonuniform1}
\mathcal{T}_1(f,2m_0,\alpha,t) = \sum_{p=0}^{m_0-1}f( t_p)y_{1,\alpha}(t-t_p) + f(\alpha +t_p)y_{2,\alpha}(t-t_p) .
\end{equation}
The resulting interpolating functions are:
\begin{align}
	y_{1,\alpha}(t) & = \frac{\left(e^{\i m_0 t}-1\right) \left(e^{\i ( m_0 \alpha + {N_1} t)}-e^{\i  (m_0+{N_1})t}\right)}{m_0\left(e^{\i t}-1\right) \left(e^{\i  m_0  \alpha}-1\right)},\label{intepf1}\\
	y_{2,\alpha}(t)& =\frac{e^{\i N_1 t}\left(e^{\i m_0 t}-1\right) \left(e^{\i m_0 \alpha  }-e^{\i m_0 t}\right) e^{\i (1-m_0-N_1)\alpha}}{m_0\left(e^{\i m_0 \alpha }-1\right) \left(e^{\i \alpha }-e^{\i t}\right)}. \label{intepf2}
\end{align}
\begin{figure*}[!t]
	\centering
	\includegraphics[width=4.5in]{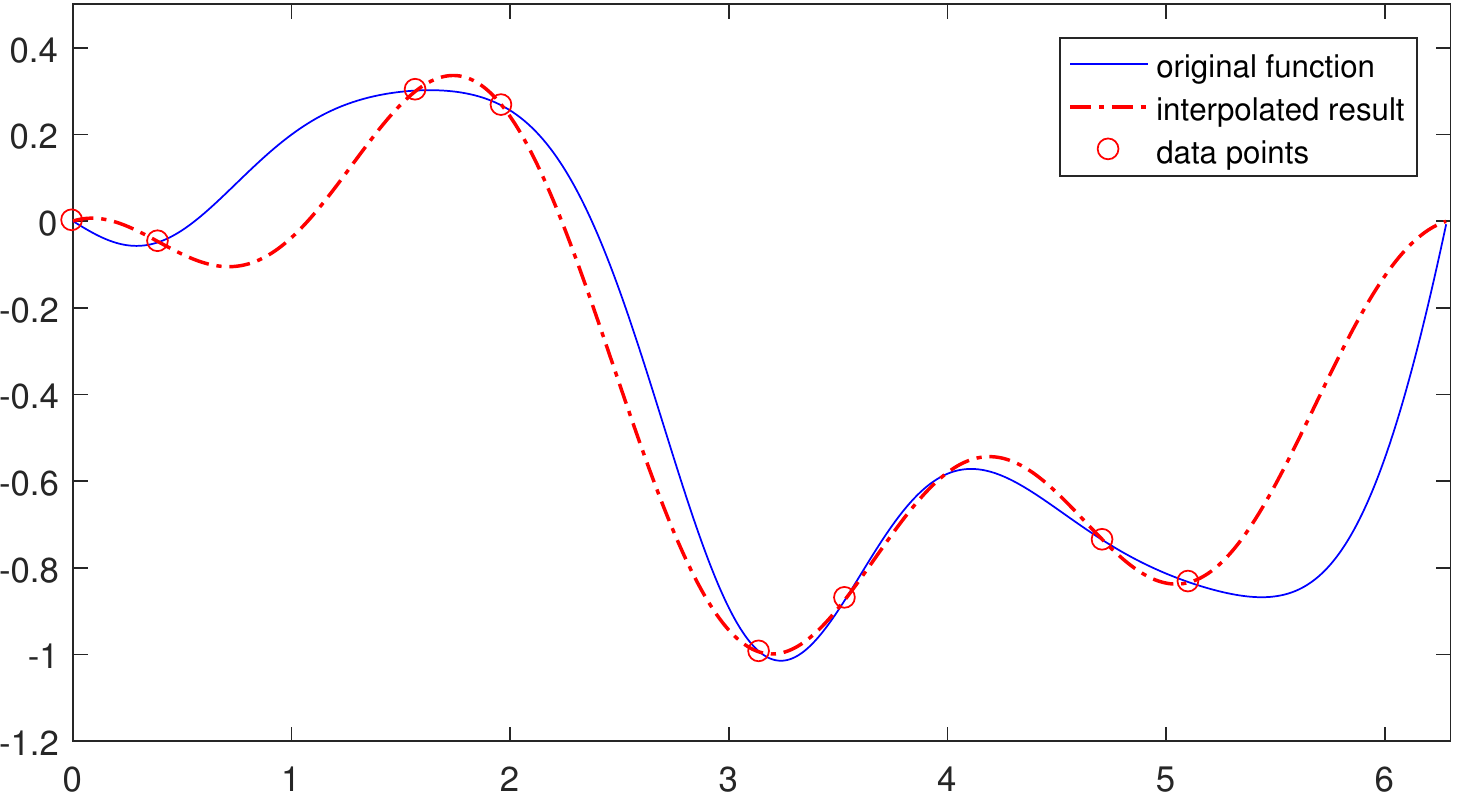}\\
	\caption{ Illustration of   interpolation for   recurrent nonuniform samples and its  consistency. The blue line is  original function.  The red  dash-dot line  is the interpolated result for the given data points.}\label{figinterlaced11}
\end{figure*}
It is noted that for the case $\alpha=\frac{\pi}{m_0}$, the formula (\ref{recurrent_nonuniform1}) reduces to the uniform sampling interpolation. Another fact is that if $m_0$ is larger than the half bandwidth of $f$, then
the reconstruction is exact. Most often, one may have no need to compute the interpolating functions, since $y_{i,\alpha}(t)$ in  (\ref{intepf1}) and    (\ref{intepf2}) can be implemented by FFT efficiently \cite{chengkou2018multi1}.

Importantly, the interpolation  consistency holds for  the formula (\ref{recurrent_nonuniform1}). Namely, the following identities hold:
\begin{align}
	y_{1,\alpha}(t_q-t_p)& = \delta_{pq}, ~y_{1,\alpha}(t_q-t_p+\alpha) = 0 \label{consist3}\\
	y_{2,\alpha}(t_q-t_p)& = 0, ~y_{1,\alpha}(t_q-t_p+\alpha) = \delta_{pq}  \label{consist4}
\end{align}
where $p,q=1,2,\dots,m_0$ and $\delta_{pq} =1$ for $p=q$ and $\delta_{pq} =0$  otherwise.   By direct computation from interpolating functions (\ref{intepf1}) and (\ref{intepf2}), formulas (\ref{consist3}) and (\ref{consist4}) hold. A concrete example is depicted in Figure \ref{figinterlaced11}. Here, $m_0=4,~ \alpha=\frac{\pi}{2m_0}$ and the original function is given by
\begin{equation}\label{ex-signal}
f(t)=0.05t(t-2\pi)(0.04t^2 + 0.02t^3 + \cos(3\sin t)), \quad t\in [0,2\pi).
\end{equation}
We see that the red  dash-dot line  passes through all the red circles.

\subsection{Recurrent nonuniform samples and   derivatives}

In this part we consider a kind of recurrent  multichannel interpolation which involves nonuniform
samples   and   derivatives. Let $b_1(n) = e^{\i n \alpha}$ and $b_2(n) = \i n$, then we have a matrix defined by
\begin{equation*}
	\mathbf{H}_n= \begin{bmatrix}
		e^{\i n \alpha} & \i n\\
		e^{\i (n+m_0)\alpha} & \i (n+m_0)
	\end{bmatrix}\  \text{for} \ n\in I_1.
\end{equation*}
It is easy to get its inverse as
\begin{equation*}
	\mathbf{H}_n^{-1}= \begin{bmatrix}
		\frac{e^{-\i n \alpha } (m_0+n)}{m_0+n - n e^{\i m_0 \alpha } } & -\frac{ n e^{-\i n \alpha } }{m_0+n-n e^{\i m_0 \alpha } } \\
		\frac{\i e^{\i m_0 \alpha }}{m_0 +n - n e^{\i m_0 \alpha }} & -\frac{\i}{m_0 +n- ne^{\i m_0 \alpha }} \\
	\end{bmatrix}.
\end{equation*}
Set
\begin{align*}
	v_{1,n,\alpha}(t)&:=\frac{e^{-\i n \alpha } (m_0+n)}{m_0+n - n e^{\i m_0 \alpha } } e^{\i n t}-\frac{ n e^{-\i n \alpha } e^{\i (n+m_0)t} }{m_0+n-n e^{\i m_0 \alpha } } =\frac{\left(m_0- n \left(e^{\i m_0 t}-1\right)\right) e^{\i n (t-\alpha )}}{m_0+ n-n e^{\i m_0 \alpha  }},\\
	v_{2,n,\alpha}(t)& :=\frac{\i e^{\i m_0 \alpha }e^{\i n t}}{m_0+n - n e^{\i m_0 \alpha } } -\frac{ \i e^{\i (n+m_0)t} }{m_0+n-n e^{\i m_0 \alpha } } =\frac{\i \left(e^{\i ( m_0\alpha+n t)}-e^{\i  (m_0+n)t}\right)}{m_0+n-n e^{\i  m_0 \alpha }}.
\end{align*}
It should be noted that if $\alpha =\frac{\pi}{m_0}$, then the denominator in $v_{i,n,\alpha}(t)$, i.e.,   $m_0+n - n e^{\i m_0 \alpha }=m_0+2n$  would become  $0$ for   $n=-\frac{m_0}{2}$.  Otherwise, we have the interpolating functions
\begin{equation*}
	y_{k,\alpha}(t-t_p) = \frac{1}{m_0} \sum_{n=N_1}^{N_1+m_0-1} v_{k,n,\alpha}(t) e^{-\i n \frac{2\pi p}{m_0}}, \quad k=1,2.
\end{equation*}
Moreover,  a bandlimited function $f\in B_{\mathbf{N}}$  can be exactly reconstructed by
\begin{equation}\label{recurrent_nonuniform}
\mathcal{T}_2(f,2m_0,\alpha,t) := \sum_{p=0}^{m_0-1}f(\alpha + t_p)y_{1,\alpha}(t-t_p) + f'(t_p)y_{2,\alpha}(t-t_p),
\end{equation}
provided that  $m_0\geq \frac{\mu(I^{\mathbf{N}})}{2}$.
\begin{figure*}[!t]
	\centering
	\includegraphics[width=4.5in]{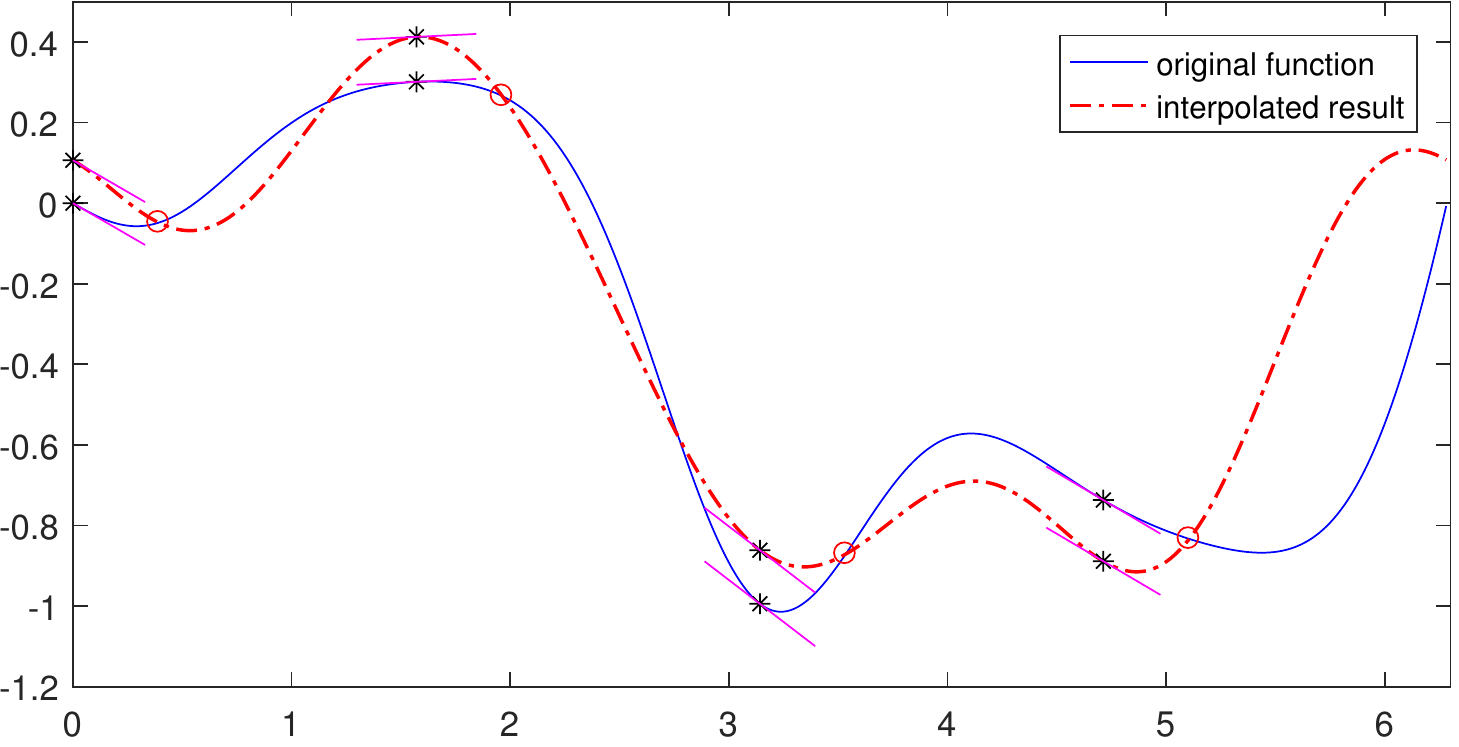}\\
	\caption{ Illustration of interpolation for  recurrent nonuniform samples of a function and its derivative. The blue line is  original function.  The red  dash-dot line  is  interpolated result for the given data points.}\label{figinterlaced_consistency}
\end{figure*}
As mentioned in \cite{chengkou2018multi1}, the interpolating  functions $y_{k,\alpha}(t-t_p)$  for $k=1,2$ can be calculated by taking FFT for $ v_{k,n,\alpha}(t)$  with respect to $n$. When $\alpha =0$, the formula (\ref{recurrent_nonuniform}) reduces to a kind of multichannel interpolation for uniformly distributed data  $\{f(t_p)\}, \{f'(t_p)\}$:
\begin{equation*}
	\mathcal{T}_2(f,2m_0,0,t) \sum_{p=0}^{m_0-1}f( t_p)y_{1,0}(t-t_p) + f'(t_p)y_{2,0}(t-t_p),
\end{equation*}
where
\begin{align*}
	y_{1,0}(t) & = \frac{e^{\i N_1 t}(e^{\i m_0 t}-1)^2(N_1+m_0-(N_1+m_0-1)e^{\i t})}{m_0^2(1-e^{\i t})^2},\\
	y_{2,0}(t)& = \frac{\i e^{\i {N_1} t} \left(2 e^{\i  {m_0}t}-e^{2\i  {m_0} t}-1\right)}{{m_0^2} \left(e^{\i t}-1\right)}.
\end{align*}
We illustrate the   interpolation formula (\ref{recurrent_nonuniform}) in Figure \ref{figinterlaced_consistency} for  recurrent non-uniformly distributed data of $f(t)$ given by  (\ref{ex-signal}). The red circles represent the samples of $f(t)$. The reconstructed function (in red dash-dot line) passes through all the red circles. Besides,  the blue line and   red dash-dot line  have the same slope at the particular positions (shown by  black asterisks), and the $t$-coordinates of red circles and black asterisks are interlaced and bunched.

\section{Multichannel  interpolation of generic non-uniformly distributed data}\label{S4}

Although referred to as nonuniform, there are restrictions on location of samples for recurrent nonuniform sampling case. The distribution of samples, to some extent,  is still regular. Moreover, as mentioned in \cite{sommen2008relationship}, recurrent nonuniform samples can be regarded as a combination of several mutual delayed sequences of uniform samples. In this part, we consider a   more general interpolation formula which is applicable to generic non-uniformly distributed data.
Thanks to the finite summation in (\ref{drictexpression}), it is possible to consider a   specific  case. Let $M=\mu(I^{\mathbf{N}})$, $I_1=\{N_1\}$, $K=1$  in (\ref{drictexpression}), then we construct a matrix
\begin{equation*}
	\mathbf{H}= \begin{bmatrix}
		b_1(N_1)&b_2(N_1)  &\cdots  & b_M(N_1) \\
		b_1(N_1+1)&b_2(N_1+1)  & \cdots&b_M(N_1+1)  \\
		\vdots& \vdots & \ddots & \vdots \\
		b_1(N_1+M-1)& b_2(N_1+M-1) &  \cdots & b_M(N_1+M-1)
	\end{bmatrix}.
\end{equation*}
Under this setting, one may drive various nonuniform sampling interpolation formulas provided that $\mathbf{H}$ is invertible. The key points are how to determine whether $\mathbf{H}$ is invertible and how to calculate the inverse. Unlike $\mathbf{H}_n$ in the normal case, $\mathbf{H}$  is   a large complex-valued matrix  with  high condition number in general. Therefore, in order  to achieve a stable reconstruction,  it is not feasible to compute the inverse of $\mathbf{H}$ by  numerical methods.

\subsection{Generic nonuniform samples}

Let $0\leq t_1< t_2<\dots<t_M<2\pi$   and $b_p(n)=e^{\i n {t_p}}$ for $1\leq p \leq M$. We have the following matrix:
\begin{equation*}
	\mathbf{H}= \begin{bmatrix}
		e^{\i N_1 t_1}& e^{\i N_1 t_2}  &\cdots  & e^{\i N_1 t_M} \\
		e^{\i (N_1+1) t_1}& e^{\i (N_1+1) t_2} & \cdots& e^{\i (N_1+1) t_M}  \\
		\vdots& \vdots & \ddots & \vdots \\
		e^{\i (N_1+M-1) t_1}&e^{\i (N_1+M-1) t_2} &  \cdots &e^{\i (N_1+M-1) t_M}
	\end{bmatrix}.
\end{equation*}
It is easy to show that the determinant of $\mathbf{H}$ is
\begin{equation*}
	\det \mathbf{H} = e^{\i N_1 (t_1+t_2+\cdots+t_M)} \prod_{1 \leq p <q \leq M} \left(e^{\i N_1t_p} - e^{\i N_1t_q} \right) \neq 0.
\end{equation*}
That means that $\mathbf{H}$ is invertible. Denote   by $ z_{p}(k)$ the $(p,k)$-th element of $\mathbf{H}^{-1}$. It can be shown that
\begin{equation*}
	z_{p}(k) = \frac{(-1)^{k+1}e^{-\i N_1  t_p}}{\displaystyle\prod_{\substack{1\leq s\leq M\\ s\neq p}}(e^{\i t_s}-e^{\i t_p})}
	\sum_{\substack{1\leq s_1<s_2<\cdots<s_{M-k}\leq M\\ s_1,s_2,\cdots,s_{M-k}\neq p}} e^{\i (t_{s_1}+t_{s_2}+\cdots+t_{s_{M-k}})}.
\end{equation*}
Therefore  we conclude that a periodic bandlimited function
$f\in B_{\mathbf{N}}$ can be exactly reconstructed from its $M \geq \mu(I^{\mathbf{N}})$ non-uniformly distributed samples. The interpolation formula is given by
\begin{equation}\label{nonuniforminterp}
\mathcal{T}_3(f,M,t):= \sum_{p=1}^{M}f(t_p)h_p(t)
\end{equation}
where
\begin{equation*}
	h_p(t):=\sum_{k=1}^{M}z_{p}(k)e^{\i(k+N_1-1)t}.
\end{equation*}

\begin{figure*}[!t]
	\centering
	\includegraphics[width=4.5in]{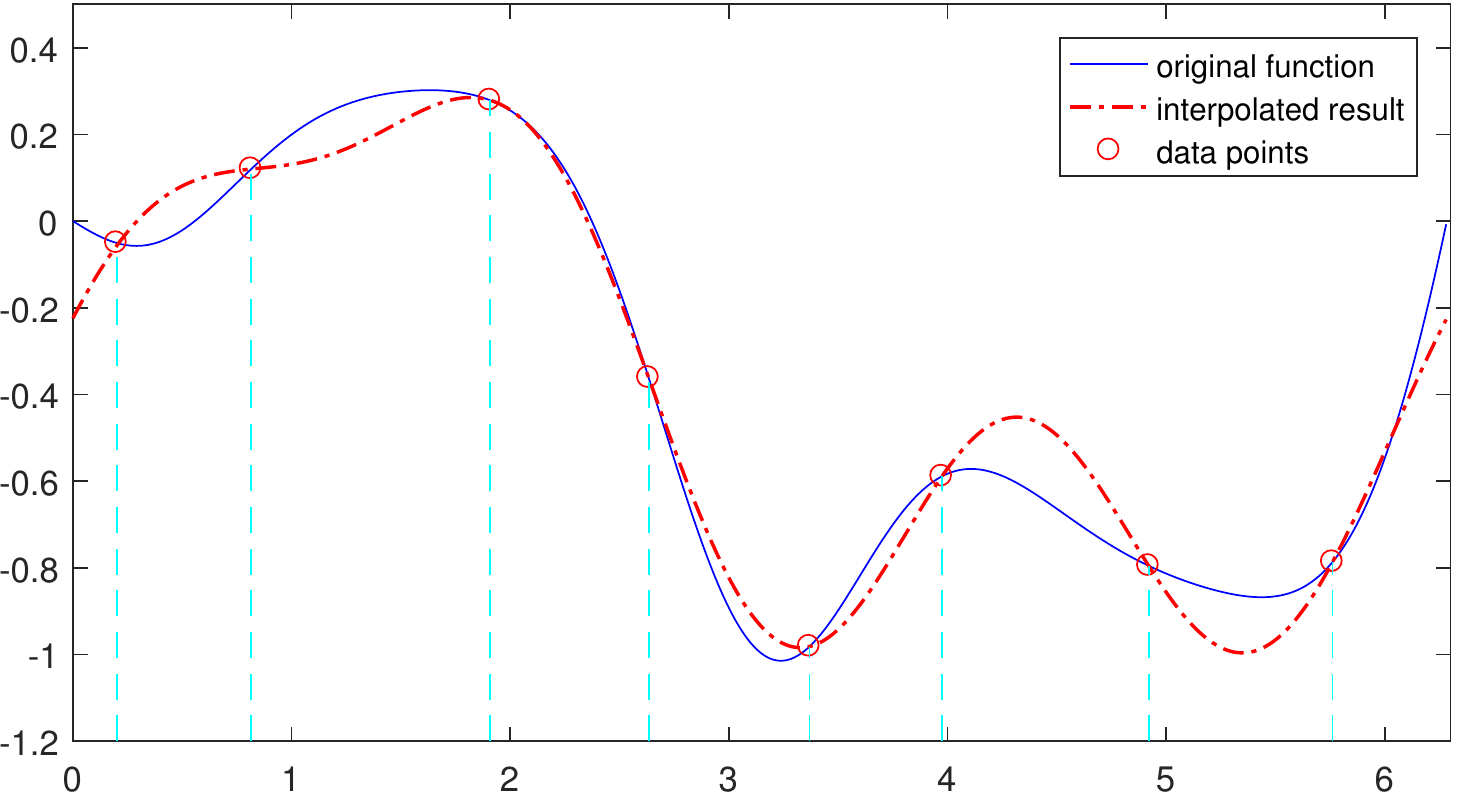}\\
	\caption{ Illustration of   interpolation  for generic nonuniform samples. The blue line is  original function.  The red  dash-dot line  is  interpolated result for the given data points.}\label{fignonuniform11}
\end{figure*}

We can compute $N(>M)$ function values of $h_p(t)$ by taking $N$ fast Fourier transform for $\{z_{p}(k)\}_k$ through zero padding. By applying some trigonometric identities,  the interpolating function $h_p(t)$ can be   simplified into a simper form. By a few basic calculations,
\begin{align*}
	e^{\i t}-e^{\i t_s} =& \cos t-\cos t_s+\i (\sin t-\sin t_s)
	= 2\i \sin(\frac{t-t_s}{2})e^{\i \frac{t+t_s}{2}}.
\end{align*}
Therefore
\begin{equation}\label{tri-ide1}
\prod_{s=1,s\neq p}^{M}(e^{\i t}-e^{\i t_s})=(2\i)^{M-1}e^{\i \frac{M-1}{2}t}
\prod_{s=1,s\neq p}^{M}\sin\left(\frac{t-t_s}{2} \right) e^{\i \frac{t_s}{2}}.
\end{equation}
Note that the left hand side of  (\ref{tri-ide1}) is a trigonometric polynomial with respect to $t$.
Expanding the product, we have
\begin{equation*}
	\prod_{s=1,s\neq p}^{M}(e^{\i t}-e^{\i t_s}) = \sum_{k=1}^M\beta_k e^{\i(k-1)t}
\end{equation*}
where
\begin{equation*}
	\beta_k = (-1)^{M-k}\sum_{\substack{1\leq s_1<s_2<\cdots<s_{M-k}\leq M\\ s_1,s_2,\cdots,s_{M-k}\neq p}}e^{\i (t_{s_1}+t_{s_2}+\cdots+t_{s_{M-k}})}.
\end{equation*}
By similar arguments to (\ref{tri-ide1}), we have
\begin{equation*}\label{tri-ide2}
	\prod_{s=1,s\neq p}^{M}(e^{\i t_s}-e^{\i t_p}) =(-2\i)^{M-1} e^{\i \frac{M-1}{2}t_p}
	\prod_{s=1,s\neq p}^{M}\sin\left(\frac{t_p-t_s}{2} \right) e^{\i \frac{t_s}{2}}.
\end{equation*}
It follows that
\begin{align*}
	h_p(t)=&\sum_{k=1}^{M}z_{p}(k)e^{\i(k+N_1-1)t}
	= \sum_{k=1}^{M} \frac{(-1)^{k+1}e^{-\i N_1  t_p}}{\displaystyle\prod_{\substack{1\leq s\leq M\\ s\neq p}}(e^{\i t_s}-e^{\i t_p})}(-1)^{M-k}\beta_ke^{\i(k+N_1-1)t}  \nonumber\\
	= & \frac{(-1)^{M+1} e^{\i N_1  (t-t_p)}}{\displaystyle\prod_{\substack{1\leq s\leq M\\ s\neq p}}(e^{\i t_s}-e^{\i t_p})}\sum_{k=1}^M\beta_k e^{\i(k-1)t}
	=  \frac{(-1)^{M+1} e^{\i N_1  (t-t_p)}}{\displaystyle\prod_{\substack{1\leq s\leq M\\ s\neq p}}(e^{\i t_s}-e^{\i t_p})}\prod_{s=1,s\neq p}^{M}(e^{\i t}-e^{\i t_s})\nonumber\\
	= & e^{\i N_1  (t-t_p)} e^{\frac{\i (M-1)(t-t_p)}{2}}
	\frac{\prod_{s=1,s\neq p}^{M}\sin\left(\frac{t-t_s}{2} \right)}
	{\prod_{s=1,s\neq p}^{M}\sin\left(\frac{t_p-t_s}{2} \right)}. \nonumber 
\end{align*}
This formula is consistent with the result presented in \cite{margolis2008nonuniform} by selecting specific values for  parameters $N_1$ and $M$. In comparison to the proof of this result in \cite{margolis2008nonuniform}, the proposed derivation is simpler and more  understandable.

We illustrate the   interpolation formula (\ref{nonuniforminterp}) in Figure \ref{fignonuniform11} for   nonuniform samples of $f(t)$ given by  (\ref{ex-signal}). The red circles represent the randomly selected  nonuniform samples of $f(t)$. The reconstructed function (in red dash-dot line) passes through all the red circles.  For the case $t_p=\frac{2\pi (p-1)}{M}$, $0\leq p \leq M-1$, the formula (\ref{nonuniforminterp}) reduces to the uniform sampling interpolation given in \cite{chengkou2018multi1}.

\subsection{Generic nonuniform samples   and  derivatives}

The fact that a bandlimited function could be reconstructed from the values of the function and its derivative is well known \cite{papoulis1977generalized}. However, the samples involved in such a theorem are uniformly distributed.
Let $t_1, t_2,\dots,t_{m_0}$ be arbitrary $m_0$ non-uniformly spaced points on $[0,2\pi)$. Suppose that
$f\in B_{\mathbf{N}}$ with $\mu(I^{\mathbf{N}})\leq M=2 m_0$.
There is a question of whether $f$ can be perfectly reconstructed from the samples of itself and its first derivative (i.e., $\{f(t_p),f'(t_p)\}_{p=1}^{m_0}$). It is tantamount to asking whether  $\mathbf{H}$ is invertible. Here $\mathbf{H} = [v_{kj}]$ is a $M$-th order square matrix with

\begin{equation*}
	v_{kj} :=\begin{cases}
		e^{\i (N_1+k-1)t_p}, & \quad j =2p-1;\\
		\i (N_1+k-1)e^{\i (N_1+k-1)t_p}, &\quad j=2p.
	\end{cases}
\end{equation*}
The answer is affirmative. In this subsection, we   derive the main result of  the current paper: interpolation for non-uniformly distributed samples of a function and its derivative. The interpolating functions are presented in closed-form   and the error of reconstructing a non-bandlimited function by proposed formula will be discussed in the next section.

Let $\widetilde{\mathbf{H}} = [\widetilde{v}_{kj}]$ with
\begin{equation*}
	\widetilde{v}_{kj} :=\begin{cases}
		e^{\i (k-1)t_p}, & \quad j =2p-1;\\
		(N_1+k-1)e^{\i (k-1)t_p}, &\quad j=2p.
	\end{cases}
\end{equation*}
It is easy to see that
\begin{equation*}
	\det\mathbf{H} = (\i)^{m_0}e^{2\i N_1(t_1+t_2+\cdots+t_{m_0}) } \det\widetilde{\mathbf{H}} .
\end{equation*}
Note that $\det\widetilde{\mathbf{H}}$ is a function of $t_1, t_2,\dots,t_{m_0}$. The following lemma gives a recursive relation of $\det\widetilde{\mathbf{H}}$.

\begin{lemma}
	Let $\widetilde{\mathbf{H}}$ be given above. Then its determinant satisfies the following recursive relation
	\begin{equation}\label{recurrenceH}
	\det{\widetilde{\mathbf{H}}}(t_1,t_2,\dots,t_{m_0}) = e^{\i t_1} \prod_{p>1}^{m_0}(e^{\i t_p} - e^{\i t_1})^4 \det {\widetilde{\mathbf{H}}}(t_2,\dots,t_{m_0}).
	\end{equation}
\end{lemma}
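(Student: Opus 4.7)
My approach is to reduce $\widetilde{\mathbf{H}}$ to a standard confluent Vandermonde by column operations, then establish the recursion for that Vandermonde by viewing its determinant as a polynomial in $x_1:=e^{\i t_1}$. Setting $x_p=e^{\i t_p}$, for each $p$ I subtract $N_1$ times column $2p-1$ from column $2p$; this preserves the determinant and turns column $2p$ into $\bigl((k-1)x_p^{k-1}\bigr)_{k=1}^{2m_0}$. Pulling out the common factor $x_p$ from each such column yields
$$
\det\widetilde{\mathbf{H}}(t_1,\ldots,t_{m_0}) = \Bigl(\prod_{p=1}^{m_0} x_p\Bigr)\det V(x_1,\ldots,x_{m_0}),
$$
where $V$ is the $2m_0\times 2m_0$ confluent Vandermonde with $V_{k,2p-1}=x_p^{k-1}$ and $V_{k,2p}=(k-1)x_p^{k-2}$. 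The same reduction relates $\det\widetilde{\mathbf{H}}(t_2,\ldots,t_{m_0})$ to $\det V(x_2,\ldots,x_{m_0})$, so it suffices to prove the companion recursion $\det V(x_1,\ldots,x_{m_0})=\prod_{p>1}(x_1-x_p)^4\det V(x_2,\ldots,x_{m_0})$; then (\ref{recurrenceH}) follows using $(x_1-x_p)^4=(x_p-x_1)^4$.

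I would prove the companion recursion by two polynomial observations in $x_1$, with $x_2,\ldots,x_{m_0}$ held fixed. \emph{Divisibility.} For each $p>1$ set $\epsilon=x_1-x_p$ and Taylor-expand the only $x_1$-dependent columns as $\text{col }1=\sum_{j\ge 0}w_j\epsilon^j$ and $\text{col }2=\sum_{j\ge 0}(j+1)w_{j+1}\epsilon^j$, where $w_j:=v^{(j)}(x_p)/j!$ and $v(x):=(1,x,\ldots,x^{2m_0-1})^{\!\top}$. Columns $2p-1$ and $2p$ of $V$ are exactly $w_0$ and $w_1$, so subtracting them off peels out $\epsilon^2$; the $\epsilon^0$-residual of column $1$ is then $w_1$ (another copy of column $2p$), and subtracting column $2p$ once more yields a third $\epsilon$; at that stage the $\epsilon^0$-residuals of columns $1$ and $2$ are $w_2$ and $2w_2$, which are proportional, so one final column reduction extracts the fourth $\epsilon$. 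Hence $(x_1-x_p)^4\mid\det V$ for every $p>1$, and by coprimality $\prod_{p>1}(x_1-x_p)^4\mid\det V$. \emph{Degree/leading-coefficient match.} Laplace-expanding along columns $\{1,2\}$, the $2\times 2$ cofactor on rows $\{i_1<i_2\}$ equals $(i_2-i_1)x_1^{i_1+i_2-3}$, so $\deg_{x_1}\det V\le 4m_0-4$, matching the divisor. Choosing the row set $I=\{2m_0-1,2m_0\}$, this $2\times 2$ minor reduces to $x_1^{4m_0-4}$ exactly, the Laplace sign is $(-1)^{4m_0+2}=+1$, and the complementary minor is precisely $\det V(x_2,\ldots,x_{m_0})$; matching the coefficient of $x_1^{4m_0-4}$ identifies the quotient.

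Combining the two steps yields $\det\widetilde{\mathbf{H}}(t_1,\ldots,t_{m_0})=x_1\prod_{p>1}(x_p-x_1)^4\,\det\widetilde{\mathbf{H}}(t_2,\ldots,t_{m_0})$, which is (\ref{recurrenceH}). The main obstacle is extracting the full $\epsilon^4$ rather than merely $\epsilon^2$ in the divisibility step: the naive ``rank-$2$ collapse'' of columns $1,2$ onto columns $2p-1,2p$ delivers only $\epsilon^2$, and the remaining two factors are forced by the structural fact that column $2$ of $V$ is the $x$-derivative of column $1$, which keeps the next-order residuals aligned with pre-existing columns.
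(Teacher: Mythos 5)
Your argument is correct, but it takes a genuinely different route from the paper's. Both proofs start identically, removing the $N_1$-dependence by subtracting $N_1$ times each odd column from the following even column; from there the paper works directly on the resulting matrix $\mathbf{C}$ by explicit elimination --- clearing the first column with successive row operations, extracting the factors $(e^{\i t_k}-e^{\i t_1})^2$ in two separate rounds of column operations, and tracking the intermediate matrices $\widetilde{\mathbf{H}}^{(1)}$ and $\widetilde{\mathbf{H}}^{(2)}$ --- whereas you normalize one step further to the standard confluent Vandermonde $V$ (pulling an extra $e^{\i t_p}$ out of each even column, which is exactly where the prefactor $e^{\i t_1}$ in (\ref{recurrenceH}) comes from) and then run the classical polynomial argument: Taylor expansion of the two $x_1$-dependent columns about $x_p$ to get divisibility by $(x_1-x_p)^4$, a degree bound in $x_1$ from Laplace expansion along columns $\{1,2\}$, and identification of the leading coefficient with the complementary minor $\det V(x_2,\dots,x_{m_0})$. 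I checked the delicate points: the three successive column reductions that upgrade the obvious $\epsilon^2$ to $\epsilon^4$ are valid precisely because column $2$ of $V$ is the $x$-derivative of column $1$, as you note; the cofactor degree $i_1+i_2-3\le 4m_0-4$, the Laplace sign $(-1)^{4m_0+2}=+1$, and the bookkeeping of the $\prod_p e^{\i t_p}$ factors on the two sides of the recursion are all consistent; and the coprimality of the distinct linear factors justifies multiplying the divisibilities together. What your approach buys is conceptual economy: it is the textbook evaluation of a confluent Vandermonde, it avoids writing out the unwieldy intermediate matrices, and the degree-plus-leading-coefficient step even yields the closed product formula for $\det\widetilde{\mathbf{H}}$ without the induction. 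What the paper's elimination buys is reusability: the same row and column manipulations are applied almost verbatim later to the auxiliary matrix $\mathbf{A}$ to compute the cofactors $C_{k2}$ and hence the closed forms of the interpolating functions $\phi_p$ and $\psi_p$, so the paper's proof doubles as the computational template for the rest of Section \ref{S4}.
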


\begin{proof}
	Applying some column operations to $\widetilde{\mathbf{H}}$, it follows that $\det{\widetilde{\mathbf{H}}}$ is equal  to  the determinant of following matrix 
	\begin{equation}\label{simplfied-matrix}
	\mathbf{C}=\begin{bmatrix}
	1	&0&\cdots  &1	&0 \\
	e^{\i t_1}	&e^{\i t_1}  &\cdots  &e^{\i t_{m_0}}	&e^{\i t_{m_0}} \\
	e^{\i 2t_1}	&2e^{\i 2t_1}  &\cdots  &e^{\i 2t_{m_0}}	&2e^{\i 2t_{m_0}} \\
	\vdots	& \vdots & \ddots & \vdots	&\vdots\\
	e^{\i (2m_0-1)t_1}	&(2m_0-1)e^{\i (2m_0-1)t_1}  &\cdots &  e^{\i (2m_0-1)t_{m_0}}	&(2m_0-1)e^{\i (2m_0-1)t_{m_0}}
	\end{bmatrix}.
	\end{equation}
	Subtracting the multiple $e^{\i t_1}$ of row $(k-1)$ from row $k$  for $k=2m_0,2m_0-1,\cdots,2$ successively, we remove  first column    without  changing the determinant:
	\begin{multline*}
		\det\left[
		\begin{array}{cccc}
			e^{\i t_1} &x_2 & 0 \cdot x_2  e^{\i t_2} + e^{\i t_2} &\cdots\\
			e^{\i 2t_1}&x_2 e^{\i t_2}  &1\cdot   x_2e^{\i t_2}  + e^{\i 2t_2}&\cdots  \\
			\vdots & \vdots&\vdots&\ddots \\
			e^{\i (2m_0-1)t_1} &x_2e^{\i (2m_0-2)t_2} &(2m_0-2)x_2e^{\i t_2}  + e^{\i (2m_0-2)t_2}&\cdots\\
		\end{array}\right.\\
		\left.
		\begin{array}{cc}
			x_{m_0} & 0 \cdot x_{m_0}  e^{\i t_{m_0}} + e^{\i t_{m_0}}\\
			x_{m_0} e^{\i t_{m_0}}  &1\cdot   x_{m_0}e^{\i t_{m_0}}  + e^{\i 2t_{m_0}}  \\
			\vdots&\vdots \\
			x_{m_0}e^{\i (2m_0-2)t_{m_0}} &(2m_0-2)x_{m_0}e^{\i t_{m_0}}  + e^{\i (2m_0-2)t_{m_0}}\\
		\end{array}
		\right]
	\end{multline*}
	where $x_k=e^{\i t_k}-e^{\i t_1}$ for $k=2,3,\cdots,m_0$.
	Subtracting  the multiple $\frac{e^{\i t_k}}{x_k}$ of column $(2k-2)$ from column $(2k-1)$    and extracting $x_k$ from column $(2k-2)$ and $(2k-1)$ for $k=2,3,\cdots,m_0$ successively, we reach the  result of
	\begin{equation}\label{Hrecur1}
	\det\widetilde{\mathbf{H}}= x_2^2x_3^2\cdots x_{m_0}^2\det\widetilde{\mathbf{H}}^{(1)}
	\end{equation}
	where $\widetilde{\mathbf{H}}^{(1)}$ equals
	\begin{equation*}
		\begin{bmatrix}
			e^{\i t_1}& 1 & 0  & \cdots& 1 & 0 \\
			e^{\i 2t_1}& e^{\i t_2} & e^{\i t_2}  & \cdots & e^{\i t_{m_0}} & e^{\i t_{m_0}} \\
			e^{\i 3t_1}& e^{\i 2t_2} & 2 e^{\i2 t_2}  &\cdots  & e^{\i 2t_{m_0}} & 2 e^{\i2 t_{m_0}} \\
			\vdots& \vdots & \vdots & \ddots& \vdots & \vdots\\
			e^{\i (2m_0-1)t_1}& e^{\i(2m_0-2) t_2} & (2m_0-2) e^{\i(2m_0-2) t_{2}} &\cdots & e^{\i(2m_0-2) t_{m_0}} & (2m_0-2) e^{\i(2m_0-2) t_{m_0}}
		\end{bmatrix}
	\end{equation*}
	For $\widetilde{\mathbf{H}}^{(1)}$, extracting $e^{\i t_1}$ from the first column and subtracting the multiple $e^{\i t_1}$ of row $(k-1)$ from row $k$  for $k=2m_0-1,2m_0-2,\cdots,2$ successively, we remove  first column of $\widetilde{\mathbf{H}}^{(1)}$   and reach the result of
	\begin{equation}\label{Hrecur2}
	\det \widetilde{\mathbf{H}}^{(1)} =e^{\i t_1} \det \widetilde{\mathbf{H}}^{(2)}
	\end{equation}
	where $\widetilde{\mathbf{H}}^{(2)}$ equals
	\begin{multline*}
		\left[
		\begin{array}{ccc}
			x2& e^{\i t_2}  & \cdots  \\
			x_2 e^{\i t_2} & x_2 e^{\i t_2}  +e^{\i 2 t_2}&\cdots\\
			\vdots & \vdots & \ddots\\
			x_2 e^{\i(2m_0-3) t_2} & (2m_0-3) x_2e^{\i t_{2}}+e^{\i(2m_0-3) t_{2}} &\cdots\\
		\end{array}\right.\\
		\left.
		\begin{array}{cc}
			x_{m_0} & e^{\i t_{m_0}} \\
			x_{m_0} e^{\i t_{m_0}} & x_{m_0} e^{\i t_{m_0}}+ e^{\i 2 t_{m_0}}\\
			\vdots&\vdots \\
			x_{m_0} e^{\i(2m_0-3) t_{m_0}} & (2m_0-3)x_{m_0}  e^{\i t_{m_0}}+e^{\i(2m_0-3) t_{m_0}}\\
		\end{array}
		\right].
	\end{multline*}
	Subtracting  the multiple $\frac{e^{\i t_k}}{x_k}$ of column $(2k-3)$ from column $(2k-2)$    and extracting $x_k$ from column $(2k-3)$ and $(2k-2)$ for $k=2,3,\cdots,m_0$ successively, we  get
	\begin{equation}\label{Hrecur3}
	\det \widetilde{\mathbf{H}}^{(2)} = x_2^2x_3^2\cdots x_{m_0}^2\det{\widetilde{\mathbf{H}}}(t_2,\dots,t_{m_0}).
	\end{equation}
	Then the recursive relation (\ref{recurrenceH}) follows from (\ref{Hrecur1}), (\ref{Hrecur2}) and (\ref{Hrecur3}). The proof is complete.
\end{proof}

Since $\det{\widetilde{\mathbf{H}}}(t_{m_0}) = \det
\begin{bmatrix}
1 & N_1 \\
e^{\i t_{m_0}} & (N_1+1)e^{\i t_{m_0}}
\end{bmatrix} = e^{\i t_{m_0}} $. By induction, we conclude that
\begin{equation*}
	\det{\widetilde{\mathbf{H}}}(t_1,t_2,\dots,t_{m_0}) = e^{\i (t_1+t_2+\cdots+t_{m_0})}\prod_{1 \leq p <q \leq m_0}(e^{\i t_q} - e^{\i t_p})^4 .
\end{equation*}
It follows that
\begin{equation*}
	\det\mathbf{H} = (\i)^{m_0} e^{\i(2N_1+1)(t_1+t_2+\cdots+t_{m_0})}\prod_{1 \leq p <q \leq m_0}(e^{\i t_q} - e^{\i t_p})^4 \neq 0.
\end{equation*}
Therefore $\mathbf{H}$ is invertible.
Let $w_{j}(k)$ denote the $(j,k)$ element of $\mathbf{H}^{-1}$.
We define the interpolating functions $\phi_{p}(t)$ and $\psi_{p}(t)$ as follows:
\begin{align}
	\phi_{p}(t)= & \sum_{k=1}^{2m_0} w_{2p}(k) e^{\i (N_1+k-1)t},\label{interpolate-f-phi} \\
	\psi_{p}(t)=&  \sum_{k=1}^{2m_0} w_{2p-1}(k)e^{\i (N_1+k-1)t}.\label{interpolate-f-psi}
\end{align}
Then  we have a theorem  about nonuniform multichannel interpolation as follows.

\begin{theorem}
	Let $0\leq t_1< t_2<\dots<t_{m_0}<2\pi$  be non-uniformly distributed points.	Suppose that
	$f\in B_{\mathbf{N}}$ with $\mu(I^{\mathbf{N}})\leq M=2 m_0$. Then it can be exactly recovered by the following interpolation formula
	\begin{equation}\label{nonuniform-multichannel}
	\mathcal{T}_4(f,2m_0,t) =\sum_{p=1}^{m_0}f(t_p)\psi_{p}(t)+f'(t_p)\phi_{p}(t).
	\end{equation}
\end{theorem}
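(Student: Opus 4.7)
The plan is to view the samples as a linear system in the Fourier coefficients of $f$ and invert it using the matrix $\mathbf{H}$ that has just been shown to be nonsingular. Since $\mu(I^{\mathbf{N}})\leq 2m_0$, I may extend $I^{\mathbf{N}}$ to a window of length exactly $M=2m_0$ by appending zero coefficients, so
\[
f(t)=\sum_{k=1}^{M}a(N_1+k-1)\,e^{\i(N_1+k-1)t},
\]
which involves exactly $M$ unknown Fourier coefficients.

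Next I would evaluate the two kinds of samples. For each $p=1,\dots,m_0$,
\[
f(t_p)=\sum_{k=1}^{M}a(N_1+k-1)\,e^{\i(N_1+k-1)t_p},\qquad
f'(t_p)=\sum_{k=1}^{M}a(N_1+k-1)\,\i(N_1+k-1)\,e^{\i(N_1+k-1)t_p}.
\]
Stacking the samples as the column vector $\mathbf{s}$ with $s_{2p-1}=f(t_p)$, $s_{2p}=f'(t_p)$, and the Fourier coefficients as $\mathbf{a}=(a(N_1),\dots,a(N_1+M-1))^{T}$, the defining formulas for the entries $v_{kj}$ show that this system is exactly $\mathbf{s}=\mathbf{H}^{T}\mathbf{a}$.

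The main step is then to invoke the invertibility of $\mathbf{H}$ established by the preceding lemma and induction, which gives $\det\mathbf{H}\neq 0$ under the separation assumption $t_p\neq t_q$ for $p\neq q$. Thus $\mathbf{a}=(\mathbf{H}^{T})^{-1}\mathbf{s}$, and writing this entrywise with $w_{j}(k)$ the $(j,k)$-entry of $\mathbf{H}^{-1}$ gives
\[
a(N_1+k-1)=\sum_{p=1}^{m_0}\Bigl(w_{2p-1}(k)f(t_p)+w_{2p}(k)f'(t_p)\Bigr).
\]
Substituting back into the Fourier expansion of $f$ and interchanging the finite sums produces
\[
f(t)=\sum_{p=1}^{m_0}f(t_p)\sum_{k=1}^{M}w_{2p-1}(k)e^{\i(N_1+k-1)t}+f'(t_p)\sum_{k=1}^{M}w_{2p}(k)e^{\i(N_1+k-1)t},
\]
which is precisely $\mathcal{T}_4(f,2m_0,t)$ in view of the definitions \eqref{interpolate-f-phi} and \eqref{interpolate-f-psi} of $\phi_{p}$ and $\psi_{p}$.

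I do not expect any serious obstacle: the hard work, namely proving $\det\mathbf{H}\neq 0$ by the recursive identity \eqref{recurrenceH}, has been absorbed into the preceding lemma. The only care required is to align the row/column indexing in $\mathbf{H}$ with the correct ordering of the samples $(f(t_p),f'(t_p))$ and the correct ordering of the coefficients $a(N_1+k-1)$ so that the transpose $\mathbf{H}^{T}$ appears on the right side. Once the bookkeeping between the indices $j=2p-1,\,2p$ and the two types of samples is fixed, the reconstruction formula drops out directly.
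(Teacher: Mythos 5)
Your proposal is correct and is essentially the paper's argument: the paper obtains this theorem by specializing the general multichannel interpolation framework of Section \ref{S2} (with $K=1$, $I_1=\{N_1\}$, $b_{2p-1}(n)=e^{\i n t_p}$, $b_{2p}(n)=\i n e^{\i n t_p}$), which is exactly the linear system $\mathbf{s}=\mathbf{H}^{T}\mathbf{a}$ in the Fourier coefficients that you invert explicitly, with the nonsingularity of $\mathbf{H}$ supplied by the preceding lemma. Your index bookkeeping ($w_j(k)$ as the $(j,k)$ entry of $\mathbf{H}^{-1}$, so that $\psi_p$ collects row $2p-1$ and $\phi_p$ collects row $2p$) matches the paper's definitions \eqref{interpolate-f-phi}--\eqref{interpolate-f-psi}, so the reconstruction formula follows as you describe.
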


To derive the closed form expressions of $\psi_{p}$ and $\phi_{p}$, the direct approach is to compute the inverse of $\mathbf{H}$. This  is, as discussed earlier,  not a feasible approach. Fortunately, the interpolating functions can be computed tactfully by introducing some auxiliary matrices.
Firstly, we need to compute cofactor matrix of $\mathbf{C}$ defined by (\ref{simplfied-matrix}).
Constructing  an auxiliary matrix $\mathbf{A}$ by substituting the second column of $\mathbf{C}$ with $[1,e^{\i t},\cdots,e^{\i (2m_0-1)t}]$ will bring convenience to the computation:
\begin{equation*}
	\mathbf{A}=\begin{bmatrix}
		1	&1&\cdots  &1	&0 \\
		e^{\i t_1}	&e^{\i t}  &\cdots  &e^{\i t_{m_0}}	&e^{\i t_{m_0}} \\
		e^{\i 2t_1}	&e^{\i 2t}  &\cdots  &e^{\i 2t_{m_0}}	&2e^{\i 2t_{m_0}} \\
		\vdots	& \vdots & \ddots & \vdots	&\vdots\\
		e^{\i (2m_0-1)t_1}	&e^{\i (2m_0-1)t}  &\cdots &  e^{\i (2m_0-1)t_{m_0}}	&(2m_0-1)e^{\i (2m_0-1)t_{m_0}}
	\end{bmatrix}.
\end{equation*}
On the one hand, by similar arguments to the computation of $\det \widetilde{\mathbf{H}}$, we get that
\begin{align}\label{first-exression}
	&\det \mathbf{A}(t,t_1,t_2,\cdots,t_{m_0}) \nonumber \\
	=&(e^{\i t}-e^{\i t_1})\left( \prod_{s>1}^{m_0}(e^{\i t}-e^{\i t_s})^2 \right) \left( \prod_{q>1}^{m_0}(e^{\i t_q}-e^{\i t_1})^2 \right)\det{\widetilde{\mathbf{H}}}(t_2,\dots,t_{m_0}).
\end{align}
On the other hand,  the cofactor expansion of $\det\mathbf{A}$ along the second column gives:
\begin{equation}\label{second-exression}
\det \mathbf{A}(t,t_1,t_2,\cdots,t_{m_0})
= \sum_{k=1}^{2m_0}C_{k2}(t_1,t_2,\cdots,t_{m_0}) e^{\i (k-1) t}
\end{equation}
where $C_{k2}$ is the $(k,2)$ cofactor of $\mathbf{C}$. By comparing the coefficients of $e^{\i (k-1)t}$  in (\ref{first-exression}) and (\ref{second-exression}), we obtain the expression of $C_{k2}$ for $k=1,2,\cdots,2m_0$. For example,
\begin{align*}
	&C_{12}(t_1,t_2,\cdots,t_{m_0}) \nonumber\\
	=&-e^{\i (t_1+2t_2+2t_3+\cdots+2t_{m_0})}\left( \prod_{q>1}^{m_0}(e^{\i t_q}-e^{\i t_1})^2 \right)\det{\widetilde{\mathbf{H}}}(t_2,\dots,t_{m_0}) \nonumber \\
	=&-e^{\i (t_1+3t_2+3t_3+\cdots+3t_{m_0})} \left( \prod_{q>1}^{m_0}(e^{\i t_q}-e^{\i t_1})^2 \right)\prod_{2 \leq p <q \leq m_0}(e^{\i t_q} - e^{\i t_p})^4.
\end{align*}
Let $H_{kj}$ denote the $(k,j)$ cofactor of $\mathbf{H}$. Note that $\mathbf{C}$ can be constructed from $\mathbf{H}$ by applying some column operations. We immediately have the following relations:
\begin{align}
	H_{k,2p}(t_1,t_2,\cdots,t_{m_0})
	=&(\i)^{m_0-1}\left[ \prod_{s=1}^{m_0}e^{\i N_1 t_s}\right] \left[ \prod_{r=1,r\neq p}^{m_0}e^{\i N_1 t_r}\right]  C_{k,2p}(t_1,t_2,\cdot,t_{m_0})\label{H2p} \\
	H_{k,2p-1}(t_1,t_2,\cdots,t_{m_0})=&- \frac{\partial H_{k,2p}}{\partial t_p}(t_1,t_2,\cdots,t_{m_0}).\label{H1p}
\end{align}
It is well known that the elements of $\mathbf{H}^{-1}$ can be expressed by  cofactors of $\mathbf{H}$, namely
\begin{equation}\label{Fouriercoeff}
w_{j}(k) = \frac{H_{kj}}{\det \mathbf{H}}.
\end{equation}
By similar arguments to (\ref{first-exression}) and (\ref{second-exression}), we have that
\begin{align}
	&\sum_{k=1}^{2m_0}C_{k,2p}(t_1,t_2,\cdots,t_{m_0})e^{\i (k-1)t} \nonumber\\
	=&(e^{\i t}-e^{\i t_p})\left[ \prod_{s=1,s\neq p}^{m_0}e^{\i t_s}(e^{\i t}-e^{\i t_s})^2  (e^{\i t_s}-e^{\i t_p})^2  \right] \prod_{\substack{1\leq s_1<s_2< m_0\\ s_1,s_2\neq p}}(e^{\i t_{s_1}} - e^{\i t_{s_2}})^4. \label{sum_of_C}
\end{align}
%
Plugging (\ref{H2p}) and (\ref{Fouriercoeff})  into  (\ref{interpolate-f-phi}) and applying (\ref{sum_of_C}),
it follows that
\begin{equation*}
	\phi_{p}(t) = -\i e^{\i N_1(t-t_p)}(e^{\i(t-t_p)}-1)
	\left[\prod_{s=1,s\neq p}^{m_0} (e^{\i t}-e^{\i t_s})^{2} (e^{\i t_s}-e^{\i t_p})^{-2} \right].
\end{equation*}

More efforts are needed to compute $\psi_{p}(t) $ due to the partial derivative operation in (\ref{H1p}). For simplicity, we denote Eq.(\ref{sum_of_C})    and $\frac{\partial}{\partial t_p} \prod_{q=1,q\neq p}^{m_0} (e^{\i t_q} - e^{\i t_p})$ by $\xi_p(t)$  and  $\gamma_p$ respectively. By some direct computations, we have   that
$$\gamma_p = -\i \sum_{s=1,s\neq p}^{m_0} \
\prod_{\substack{1\leq q\leq m_0\\ q\neq p,q\neq s}}(e^{\i t_q}-e^{\i t_p})$$
and
\begin{align}
	\frac{\partial \xi_p(t)}{\partial t_p} =&\sum_{k=1}^{2m_0}\frac{\partial}{\partial t_p}C_{k,2p}(t_1,t_2,\cdots,t_{m_0})e^{\i (k-1)t} \nonumber\\
	=&2\gamma_p(e^{\i t}-e^{\i t_p})\left[ \prod_{s=1,s\neq p}^{m_0}e^{\i t_s}(e^{\i t}-e^{\i t_s})^2(e^{\i t_s}-e^{\i t_p})\right]\prod_{\substack{1\leq s_1<s_2< m_0\\ s_1,s_2\neq p}}(e^{\i t_{s_1}} - e^{\i t_{s_2}})^4 \nonumber\\
	& ~~~~~~~~~~~~~~~~~~~~~~~~~~~~~~~~~~ -\i \xi_p(t) e^{\i t_p} (e^{\i t}-e^{\i t_p})^{-1} .
	\label{diff_xi}
\end{align}
From (\ref{H2p}) and (\ref{H1p}), it follows that
\begin{multline}
	H_{k,2p-1}(t_1,t_2,\cdots,t_{m_0}) =-\i^{m_0-1}  e^{\i N_1(t_1+t_2+\cdots t_{m_0})} \left[\prod_{r=1,r\neq p}^{m_0}e^{\i N_1 t_r}\right]
	\frac{\partial}{\partial t_p}C_{k,2p}(t_1,t_2,\cdots,t_{m_0}) \\
	-\i^{m_0} N_1 e^{\i N_1 t_p}
	\left[\prod_{s=1,s\neq p}^{m_0}e^{2\i N_1 t_s}\right]C_{k,2p}(t_1,t_2,\cdots,t_{m_0}).
	\label{newH1p}
\end{multline}
Plugging (\ref{newH1p}) and (\ref{Fouriercoeff}) into (\ref{interpolate-f-psi}), and applying (\ref{diff_xi}) and (\ref{sum_of_C}), we get that
\begin{multline}\label{exprpsi}
	\psi_{p}(t) = 2e^{\i N_1(t-t_p)}(e^{\i t}-e^{\i t_p})\left[\prod_{s=1,s\neq p}^{m_0}(e^{\i t}-e^{\i t_s})^2(e^{\i t_s}-e^{\i t_p})^{-3} \right] \sum_{s=1,s\neq p}^{m_0}
	\prod_{\substack{1\leq q\leq m_0\\ q\neq p,q\neq s}}(e^{\i t_q}-e^{\i t_p})\\
	-\i N_1 \phi_{p}(t) +e^{\i N_1(t-t_p)} \prod_{s=1,s\neq p}^{m_0}(e^{\i t}-e^{\i t_s})^2(e^{\i t_s}-e^{\i t_p})^{-2}.
\end{multline}
\begin{figure*}[!t]
	\centering
	\includegraphics[width=4.5in]{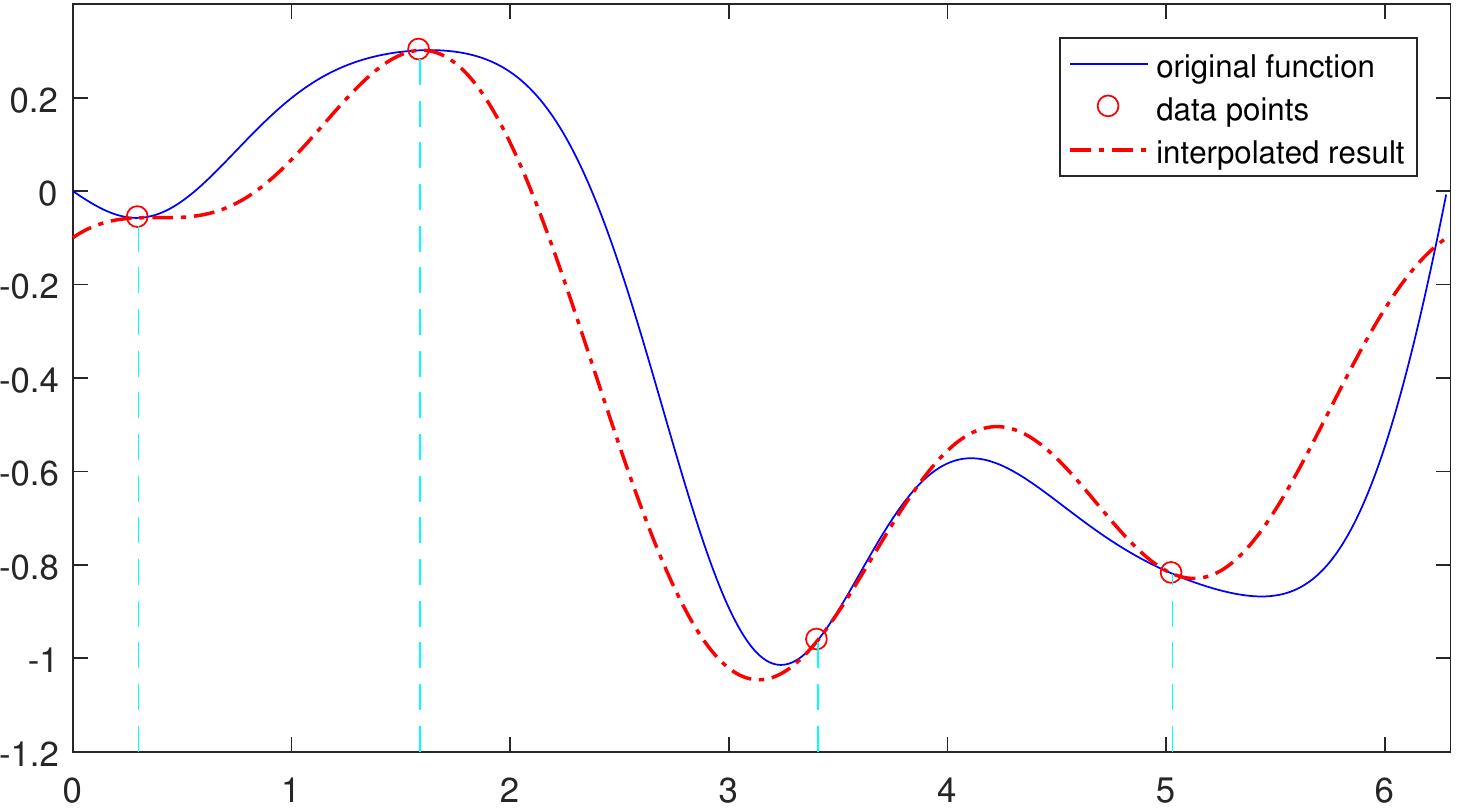}\\
	\caption{ Illustration of  interpolation for generic nonuniform samples of a function and its derivative. The blue line is  original function. The red circle is  given data points.  The red  dash-dot line  is    interpolated result for the given data points.}\label{figconsistency_verifying}
\end{figure*}

Next we shall verify that $\phi_{p}$ and $\psi_{p}$ satisfy the following interpolation consistency:
\begin{align}
	\phi_{p}(t_p)&=0, ~\phi_{p}(t_q)=0,~ \phi_{p}'(t_p)=1,~ \phi_{p}'(t_q)=0, \label{consist1}\\
	\psi_p(t_p)&=1, ~\psi_p(t_q)=0, ~\psi_p'(t_p)=0,~ \psi_p'(t_q)=0, \label{consist2}
\end{align}
for  $1\leq p\neq q \leq m_0$. This consistency guarantees that
\begin{equation*}
	f(t_p) =\mathcal{T}_4(f,2m_0,t_p),\quad  f'(t_p) = \left.\frac{\partial \mathcal{T}_4(f,2m_0,t)}{\partial t}\right|_{t=t_p} ,\quad p=1,2,\dots,m_0,
\end{equation*}
even if    the reconstruction is not exact. We only give the validation of (\ref{consist1}) and omit the proof of (\ref{consist2}) for the sake of brevity.   It is obvious that $\phi_{p}(t_p) = \phi_{p}(t_q)=0$. Let
$$x_p(t)=-\i e^{\i N_1(t-t_p)}(e^{\i(t-t_p)}-1), ~~~z_p(t) =\prod_{s=1,s\neq p}^{m_0} (e^{\i t}-e^{\i t_s})^{2} (e^{\i t_s}-e^{\i t_p})^{-2}. $$
It is easy to check that $x_p'(t_p)=z_p(t_p) =1$ and $x_p(t_p)=z_p(t_q)=z_p'(t_q)=0$. Therefore
\begin{align*}
	\phi_p'(t_p) &= x_p'(t_p)z_p(t_p)+x_p(t_p)z_p'(t_p)=1,\\
	\phi_p'(t_q) &= x_p'(t_q)z_p(t_q)+x_p(t_q)z_p'(t_q)=0.
\end{align*}
Figure \ref{figconsistency_verifying} illustrates multichannel interpolation of non-uniformly distributed data and its  interpolation consistency. The blue line displays
function given by (\ref{ex-signal}).
The nonuniform grid points are randomly selected as $(t_1,t_2,t_3,t_4) = (0.2998,1.5866,3.4062,5.0281)$. The red dash-dot line presents
the interpolated result for $(t_p,f(t_p))$, $p =1,2,3,4$. We can see that
not only  the red dash-dot line pass through all the data points but also  it is tangent  to the blue line at each point.

\section{Numerical examples and error analysis}\label{S5}

\subsection{Numerical examples}

According to the  types of samples, we abbreviate the interpolation formulas (\ref{recurrent_nonuniform1}),  (\ref{recurrent_nonuniform}),  (\ref{nonuniforminterp}) and (\ref{nonuniform-multichannel}) as  RN1, RN2,  GN1 and GN2 respectively for simplicity. Specially, (\ref{nonuniforminterp}) and (\ref{nonuniform-multichannel}) are respectively  abbreviated as U1 and
U2 if the samples are uniformly spaced. Figure \ref{inclusion} illustrates the inclusion relations of these formulas.
\begin{figure*}[!ht]
	\centering
	\includegraphics[width=3.6in]{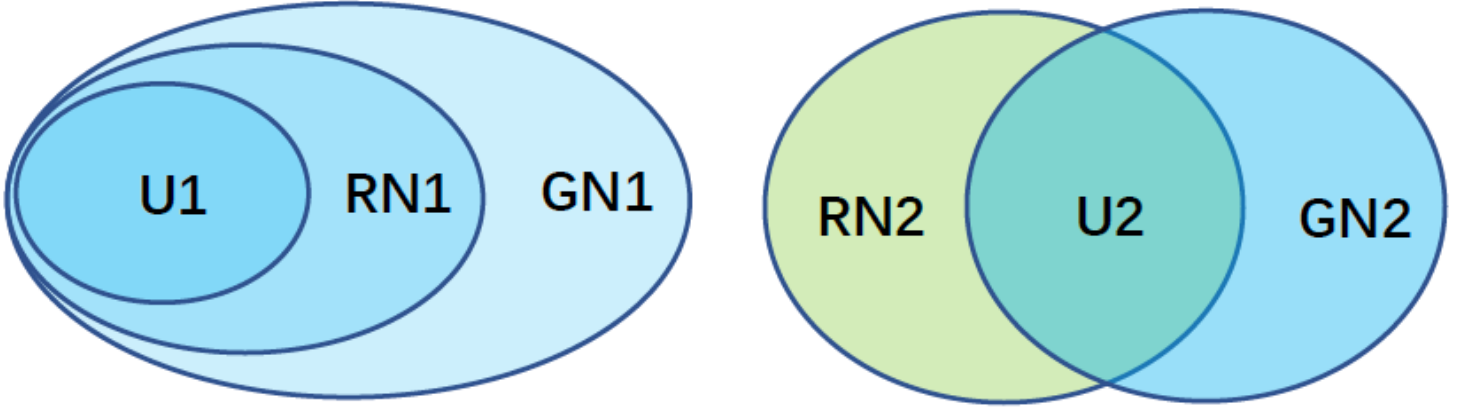}\\
	\caption{ Illustration of inclusion relations  for the interpolation formulas.}\label{inclusion}
\end{figure*}

We use  the aforementioned formulas to reconstruct non-bandlimited functions. As in \cite{chengkou2018multi1}, we select
\begin{equation*}
	\Phi(z) = \frac{0.08 z^2+0.06z^{10}}{(1.3-z)(1.5-z)} + \frac{0.05z^3+0.09z^{10}}{(1.2+z)(1.3+z)}
\end{equation*}
as the test function.  Let $f(t)=\Re[\Phi(e^{\i t}) ]$, then its Hilbert transform is
$\mathcal{H}f(t) = \Im[\Phi(e^{\i t}) ]$ by the theory of Hardy space. In the following, we compare the performance of  proposed several formulas for reconstructing   $f$ and $\mathcal{H}f$. The results are listed in Table \ref{approximatedtable}. Denote by $\hat{f}(t)$ the reconstructed result, then the relative mean square error (RMSE) is given by
\begin{equation*}
	\begin{split}
		\delta_1 = &\left. \left( \int_{\T}\abs{f(t)-\hat{f}(t)}^2dt\right)^{\frac{1}{2}} \middle /\left( \int_{\T}\abs{f(t)}^2dt\right)^{\frac{1}{2}} \right.  \\
		\approx &
		\left. \left( \sum_{p=0}^{2047}\abs{f(\tfrac{2\pi p}{2048})-\hat{f}(\tfrac{2\pi p}{2048})}^2\right)^{\frac{1}{2}} \middle / \left(\sum_{p=0}^{2047}\abs{f(\tfrac{2\pi p}{2048})}^2\right)^{\frac{1}{2}} \right. .
	\end{split}
\end{equation*}
\begin{table}[!t]
	\caption{Reconstruction results   using the different interpolation formulas}\label{approximatedtable}
	\centering
	\vspace{0.1cm}
	\begin{tabular}{c  c  c c  l l }
		\hline
		{Total samples} &  $f$ & $f'$ & {Pattern} &~~~~ $\delta_1$~(Variance)&~~~$\delta_2$ ~(Variance)  \\ \hline
		$36$ & $36$ & $0$ & RN1  & $0.8560$ & $0.8358$\\
		$36$ & $36$ & $0$ & GN1 & $0.5548$ ($0.0026$) & $0.5535$ ($0.0026$)\\
		$36$ & $36$ & $0$ & U1  & $0.5120$ & $0.5116$ \\
		$36$ & $18$ & $18$ & RN2  & $0.6163$ & $0.6159$ \\
		$36$ & $18$ & $18$ & GN2 & $1.0752$ ($0.0346$) & $1.0550$ ($0.0361$)\\
		$36$ & $18$ & $18$ & U2  & $0.9241$ & $0.8381$ \\
		$54$ & $54$ & $0$ & RN1  & $0.1955$ & $0.1922$\\
		$54$ & $54$ & $0$ & GN1 & $0.1501$ ($1.11\times 10^{-4}$) & $0.1498$ ($1.09\times 10^{-4}$) \\
		$54$ & $54$ & $0$ & U1  & $0.1376$ & $0.1376$ \\
		$54$ & $27$ & $27$ & RN2  & $0.1830$ & $0.1830$ \\
		$54$ & $27$ & $27$ & GN2 & $0.2829$ ($3.4\times 10^{-3}$) & $0.2794$ ($3.4\times10^{-3}$)\\
		$54$ & $27$ & $27$ & U2  & $0.2582$ & $0.2483$ \\
		$72$ & $72$ & $0$ & RN1  & $0.0437$ & $0.0426$\\
		$72$ & $72$ & $0$ & GN1 & $0.0321$ ($3.77\times 10^{-6}$) & $0.0320$ ($3.73\times 10^{-6}$) \\
		$72$ & $72$ & $0$ & U1  & $0.0291$ & $0.0291$ \\
		$72$ & $36$ & $36$ & RN2  & $0.0355$ & $0.0355$ \\
		$72$ & $36$ & $36$ & GN2 & $0.0647$ ($1.18\times 10^{-4}$) & $0.0636$ ($1.22\times 10^{-4}$)\\
		$72$ & $36$ & $36$ & U2  & $0.0557$ & $0.0520$ \\
		$108$ & $108$ & $0$ & RN1  & $0.0018$ & $0.0017$\\
		$108$ & $108$ & $0$ & GN1 & $0.0013$ ($4.56\times 10^{-9}$) & $0.0013$ ($4.50\times 10^{-9}$) \\
		$108$ & $108$ & $0$ & U1  & $0.0012$ & $0.0012$ \\
		$108$ & $54$ & $54$ & RN2  & $0.0014$ & $0.0014$ \\
		$108$ & $54$ & $54$ & GN2 & $0.0028$ ($1.81\times 10^{-7}$) & $0.0028$ ($1.83\times 10^{-7}$)\\
		$108$ & $54$ & $54$ & U2  & $0.0023$ & $0.0021$ \\
		\hline
	\end{tabular}
\end{table}
Similarly, we denote by $\delta_2$  the RMSE for reconstructing the Hilbert transform $\mathcal{H}f$. In the experiments, $\alpha$ is selected as $\frac{\pi}{N}$ for RN1 and RN2, if the total number of samples is $N$.  For GN1 and GN2, the nonuniform grids are randomly generated by
\begin{align}
	t_n &=(n-1)\frac{2\pi}{N}+\zeta_n,\quad n=1,2,\dots,N \label{rand1}\\
	\tilde{t}_n &=(n-1)\frac{4\pi}{N}+\eta_n,\quad n=1,2,\dots,\frac{N}{2}\label{rand2}
\end{align}
where $\zeta_n$ and $\eta_n$ are i.i.d. sequences of random variables with uniform distribution on
$(0,\frac{2\pi}{3N})$ and $(0,\frac{4\pi}{3N})$ respectively. To give a more comprehensive presentation for GN1 and GN2, we repeat each experiment of generic nonuniform sampling for 100 times. Accordingly, $\delta_1$ and $\delta_2$ of GN1 and GN2 are averaged    over these 100 times experiments,  and
the corresponding variances are also provided.

\begin{figure*}[!t]
	\centering
	\includegraphics[width=6in]{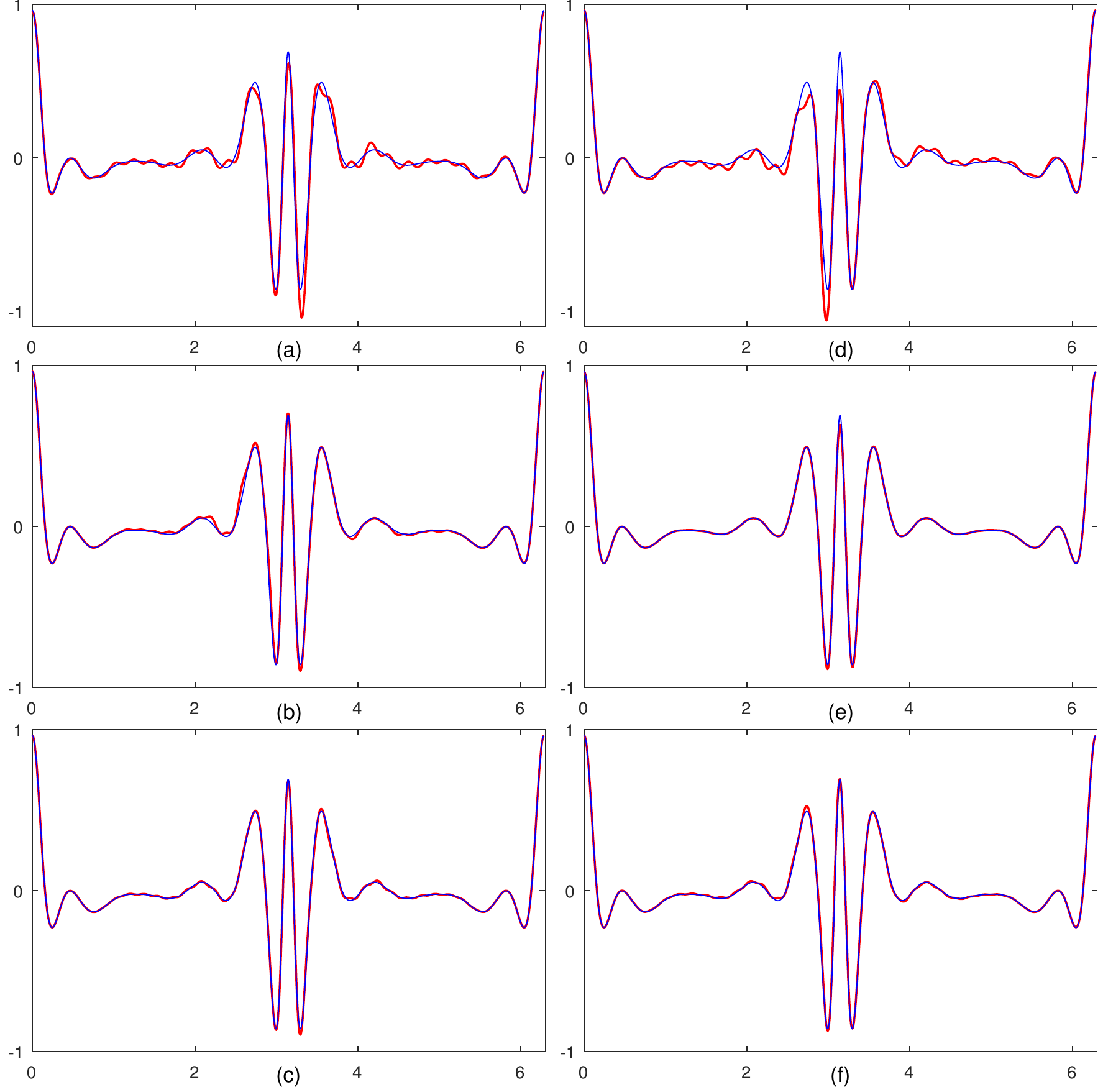}\\
	\caption{Reconstructing $f$ by (a) GN1 with total 54 samples, (b) GN2 with total 72 samples, (c) GN1 with total 72 samples, (d) RN1 with total 54 samples, (e) RN2 with total 72 samples, (f) RN1 with total 72 samples. }\label{fignonuniform}
\end{figure*}

\begin{figure*}[!t]
	\centering
	\includegraphics[width=6in]{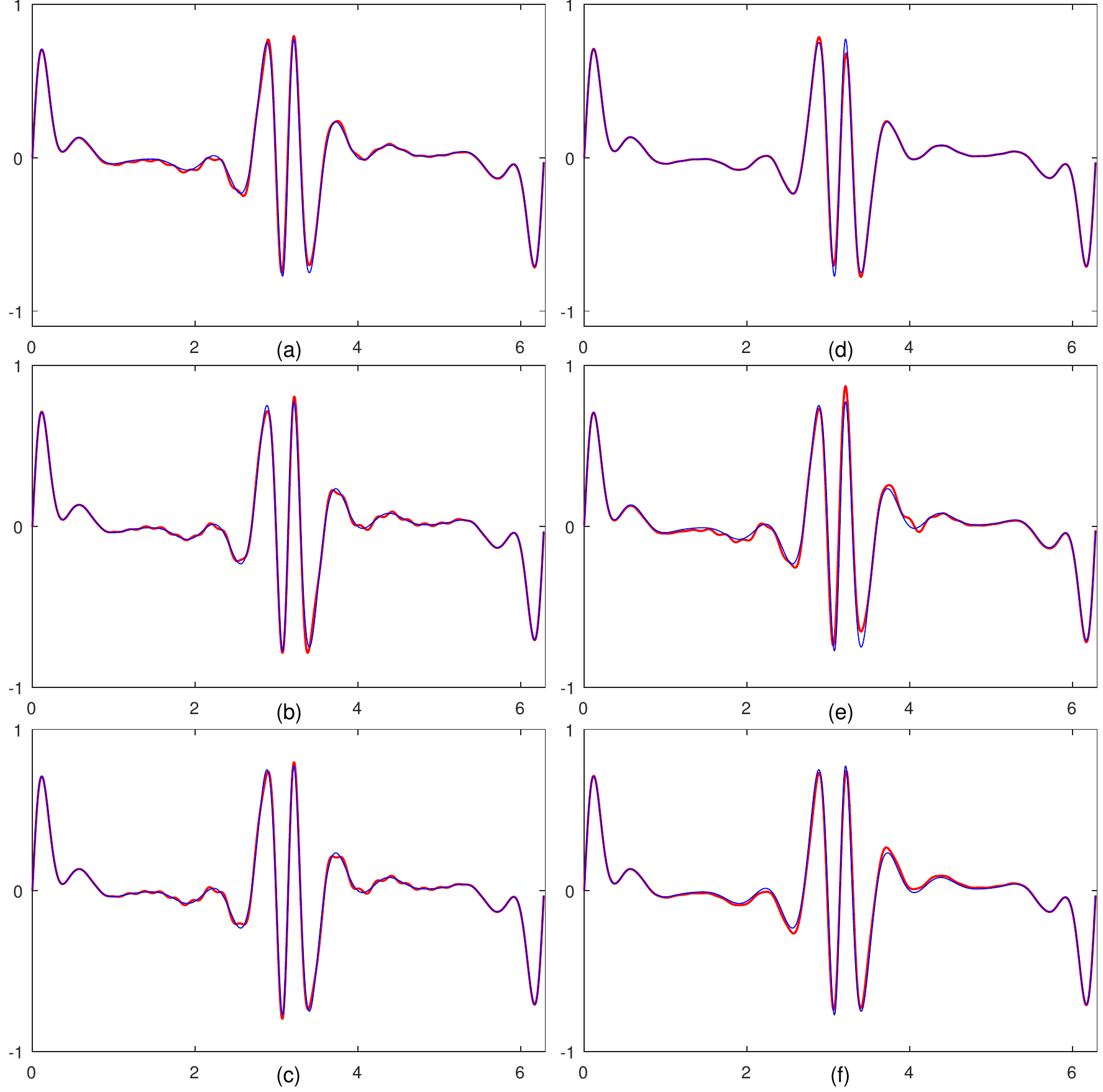}\\
	\caption{Reconstructing $\mathcal{H}f$ by (a) RN1 with total 64 samples, (b) GN1 with total 64 samples, (c) U1 with total 64 samples, (d) RN2 with total 64 samples, (e) GN2 with total 64 samples, (f) U2 with total 64 samples.}\label{fignonuniformH}
\end{figure*}

Some  results for reconstructing $f$ and $\mathcal{H}f$ are depicted in Figure \ref{fignonuniform} and \ref{fignonuniformH}.  Visually there is no much difference among these reconstructed results by different formulas provided that the same number of samples are used. Roughly, some   conclusions could be drawn  from the numerical results as follows.
\begin{enumerate}
	\item   If the same amount of data is employed to reconstruct $f$ (or $\mathcal{H}f$), the fluctuations  of RMSE caused by the different  data types   and  data distribution patterns  are not significant. In other words,  the amount of data is the  chief factor that affects   performance of   the reconstruction.
	\item  The more grid points   the   data is distributed on,  the better  performance of the    reconstruction behave.  We can see this by comparing the reconstructed results of RN2 and GN2.
	In addition,  the more even the  data distribution is, the better performance of the reconstruction behave. We can see this by comparing the reconstructed results of GN1 and U1, or GN2 and U2.
	\item In general, reconstructing a function from its own samples performs slightly better than the reconstruction that involves other types of data. This can be seen from the reconstructed results of GN1 and GN2.
\end{enumerate}
The last two  conclusions are  certainly  based on the premise that the same amount of data is used for reconstruction. And an additional observation is that $\delta_1$ and $\delta_2$ are nearly equal in each experiment， since the Fourier coefficients of $f$ and $\mathcal{H}f$ have the same absolute value for all $n\in \mathbb{Z} \setminus\{0\}$.


\subsection{Error analysis}

In the previous subsection, we presented the reconstruction errors for the proposed interpolation formulas experimentally. In this part, we will give the error estimations analytically which are very important to  the reliability of the reconstruction methods.

We denote by $f_{\tau}(t)=f(t-\tau)$  the shifted function of $f$. Let $\mathcal{T}_\mathbf{N}$ be a reconstruction operator corresponding to any one of the  aforementioned interpolation formulas. Here $\mathbf{N}$  represents  the location of Fourier coefficients for reconstructed function  $\mathcal{T}_\mathbf{N}f$.
It is easy to see that
\begin{align}
	\mathcal{T}_\mathbf{N}f_{\tau}(t)& =\sum_{p=1}^{m_0}f(t_p-\tau)\psi_p(t)+f'(t_p-\tau)\phi_p(t) \label{shiftedoperator}\\
	\mathcal{T}_\mathbf{N}f(t-\tau)& =\sum_{p=1}^{m_0}f(t_p)\psi_p(t-\tau)+f'(t_p)\phi_p(t-\tau).\nonumber
\end{align}
Thus $\mathcal{T}_\mathbf{N} f(t-\tau) \neq \mathcal{T}_\mathbf{N} f_{\tau}(t)$. Not   just for GN2,  most of the other interpolation formulas are not shift-invariant   in general.
Therefore the MSE defined by
\begin{equation*}
	\varsigma(f,\mathbf{N},\tau) = \norm{f_{\tau}-\mathcal{T}_\mathbf{N}  f_{\tau}}_2^2= \frac{1}{2\pi}\int_{\T} \abs{f_{\tau}(t)-\mathcal{T}_\mathbf{N}  f_{\tau}(t)}^2 dt
\end{equation*}
is not independent on $\tau$.  There is no doubt that $\varsigma(f,\mathbf{N},\tau)$ is ${2\pi } $ periodic in $\tau$.
Note that the time shift $\tau$ could be viewed as the phase difference of $f$ and $f_{\tau}$. And the exact phase of a function or a signal  is generally unknown  in  most practical applications \cite{jacob2002sampling}. Hence, we need to compute the averaged error
\begin{equation*}
	\varepsilon(f,\mathbf{N}) = \sqrt{\frac{1}{{2\pi }} \int_{0}^{{2\pi}}\varsigma(f,\mathbf{N},\tau)d \tau}.
\end{equation*}

As can be seen from the previous section that the derivation of GN2 is more arduous than the others. In the following, we derive the expression of averaged error for GN2.
From (\ref{shiftedoperator}), (\ref{interpolate-f-phi}) and (\ref{interpolate-f-psi}), we rewrite
$\mathcal{T}_\mathbf{N}f_{\tau}(t)$ as
\begin{equation*}
	\sum_{k=1}^{2m_0} e^{\i (N_1+k-1)t} \sum_{p=1}^{m_0}\left[f(t_p-\tau)w_{2p-1}(k)+f'(t_p-\tau)w_{2p}(k)\right].
\end{equation*}
It is noted that $\mathbf{N}$ is  equal to $\{N_1,N_1+2m_0-1\}$ in the above formula. Applying  the Parseval's identity, we have that
\begin{equation}\label{key1}
\begin{split}
& \frac{1}{2\pi}\int_{\T}\overline{f_{\tau}(t)}\mathcal{T}_\mathbf{N}f_{\tau}(t)dt\\
=&\sum_{n\in I^\mathbf{N}}\overline{a(n)} e^{\i n \tau}\sum_{p=1}^{m_0}\left[f(t_p-\tau)w_{2p-1}(n-N_1+1)+f'(t_p-\tau)w_{2p}(n-N_1+1)\right].
\end{split}
\end{equation}
Similarly,
\begin{align}
	& \norm{f_{\tau}}_2^2=\sum_{n\in \mathbb{Z}}\abs{a(n)}^2 \label{key2}\\
	&  \norm{\mathcal{T}_\mathbf{N}f_{\tau}}_2^2=\sum_{k=1}^{2m_0} \sum_{p=1}^{m_0}\sum_{q=1}^{m_0}\sum_{j=1}^{4}D_j(p,q,\tau)E_j(p,q,k) \label{key3}
\end{align}
where
\begin{align*}
	D_1(p,q,\tau)&=f(t_p-\tau)\overline{f(t_q-\tau)},\quad E_1(p,q,k) = w_{2p-1}(k)\overline{w_{2q-1}(k)}; \\
	D_2(p,q,\tau)&=f(t_p-\tau)\overline{f'(t_q-\tau)},\quad E_2(p,q,k) = w_{2p-1}(k)\overline{w_{2q}(k)}; \\
	D_3(p,q,\tau)&=f'(t_p-\tau)\overline{f(t_q-\tau)},\quad E_3(p,q,k) = w_{2p}(k)\overline{w_{2q-1}(k)}; \\
	D_4(p,q,\tau)&=f'(t_p-\tau)\overline{f'(t_q-\tau)},\quad E_4(p,q,k) = w_{2p}(k)\overline{w_{2q}(k)}.
\end{align*}
To simplify (\ref{key1}), (\ref{key2}) and (\ref{key3}), we need to introduce some  identities:
\begin{align*}
	&\frac{1}{2\pi}\int_{\T}f(t_p-\tau)e^{\i n \tau}d\tau =a(n)e^{\i n t_p},\\
	&\frac{1}{2\pi}\int_{\T}f'(t_p-\tau)e^{\i n \tau}d\tau =\i n a(n)e^{\i n t_p},\\
	& \frac{1}{2\pi}\int_{\T}f(t_p-\tau)\overline{f(t_q-\tau)}d\tau=\sum_{n\in\mathbb{Z}}\abs{a(n)}^2e^{\i n(t_p-t_q)},\\
	&\frac{1}{2\pi}\int_{\T}f(t_p-\tau)\overline{f'(t_q-\tau)}d\tau=-\i\sum_{n\in\mathbb{Z}}\abs{a(n)}^2ne^{\i n(t_p-t_q)},\\
	&\frac{1}{2\pi}\int_{\T}f'(t_p-\tau)\overline{f'(t_q-\tau)}d\tau=\sum_{n\in\mathbb{Z}}\abs{a(n)}^2n^2e^{\i n(t_p-t_q)}.
\end{align*}
Integrating   the both sides of (\ref{key1}) and (\ref{key3}) on $\T$ with respect to $\tau$ and making use of the above identities, we get that
\begin{align*}
	&	\frac{1}{4\pi^2}\int_{\T}d\tau \int_{\T}\overline{f_{\tau}(t)}\mathcal{T}_\mathbf{N}f_{\tau}(t)dt\\
	=& \sum_{n\in I^\mathbf{N}}\abs{a(n)}^2 \sum_{p=1}^{m_0}\left( e^{\i n t_p}w_{2p-1}(n-N_1+1)+\i n e^{\i n t_p}w_{2p}(n-N_1+1)\right)
\end{align*}
and
\begin{align*}
	&	\frac{1}{2\pi}\int_{\T}\norm{\mathcal{T}_\mathbf{N}f_{\tau}}_2^2d\tau \\
	=&\sum_{k=1}^{2m_0} \sum_{n\in \mathbb{Z}}\abs{a(n)}^2 \sum_{1\leq p,q \leq m_0}\Big(e^{\i n (t_p-t_q)}  E_1(p,q,k)-\i n e^{\i n (t_p-t_q)}  E_2(p,q,k)\\
	& ~~~~~~~~~~~~~~~~~~~~~~~~~~~~~~~ +\i n e^{\i n (t_p-t_q)}  E_3(p,q,k)+ n^2e^{\i n (t_p-t_q)}  E_3(p,q,k)\Big)\\
	=&\sum_{k=1}^{2m_0} \sum_{n\in \mathbb{Z}}\abs{a(n)}^2 \abs{\sum_{p=1}^{m_0} \left(e^{\i nt_p}w_{2p-1}(k)+\i n e^{\i nt_p}w_{2p}(k)\right) }^2.
\end{align*}
From the definition of $w_j(k)$, for any $n\in I^\mathbf{N}$
\begin{align*}
	& \sum_{p=1}^{m_0}\left( e^{\i n t_p}w_{2p-1}(n-N_1+1)+\i n e^{\i n t_p}w_{2p}(n-N_1+1)\right)=1,\\
	& \sum_{p=1}^{m_0} \left(e^{\i nt_p}w_{2p-1}(k)+\i n e^{\i nt_p}w_{2p}(k)\right)=1.
\end{align*}
It follows that
\begin{align*}
	\frac{1}{4\pi^2}\int_{\T}d\tau \int_{\T}{f_{\tau}(t)}\overline{\mathcal{T}_\mathbf{N}f_{\tau}(t)}dt=	\frac{1}{4\pi^2}\int_{\T}d\tau \int_{\T}\overline{f_{\tau}(t)}\mathcal{T}_\mathbf{N}f_{\tau}(t)dt
	= \sum_{n\in I^\mathbf{N}}\abs{a(n)}^2
\end{align*}
and
\begin{align*}
	&\frac{1}{2\pi}\int_{\T}\norm{\mathcal{T}_\mathbf{N}f_{\tau}}_2^2d\tau\\
	= &
	\sum_{n\in I^\mathbf{N}}\abs{a(n)}^2 +\sum_{n\notin I^\mathbf{N}}\abs{a(n)}^2\sum_{k=1}^{2m_0}\abs{\sum_{p=1}^{m_0} \left(e^{\i nt_p}w_{2p-1}(k)+\i n e^{\i nt_p}w_{2p}(k)\right) }^2.
\end{align*}
\begin{figure*}[!t]
	\centering
	\includegraphics[width=4.5in]{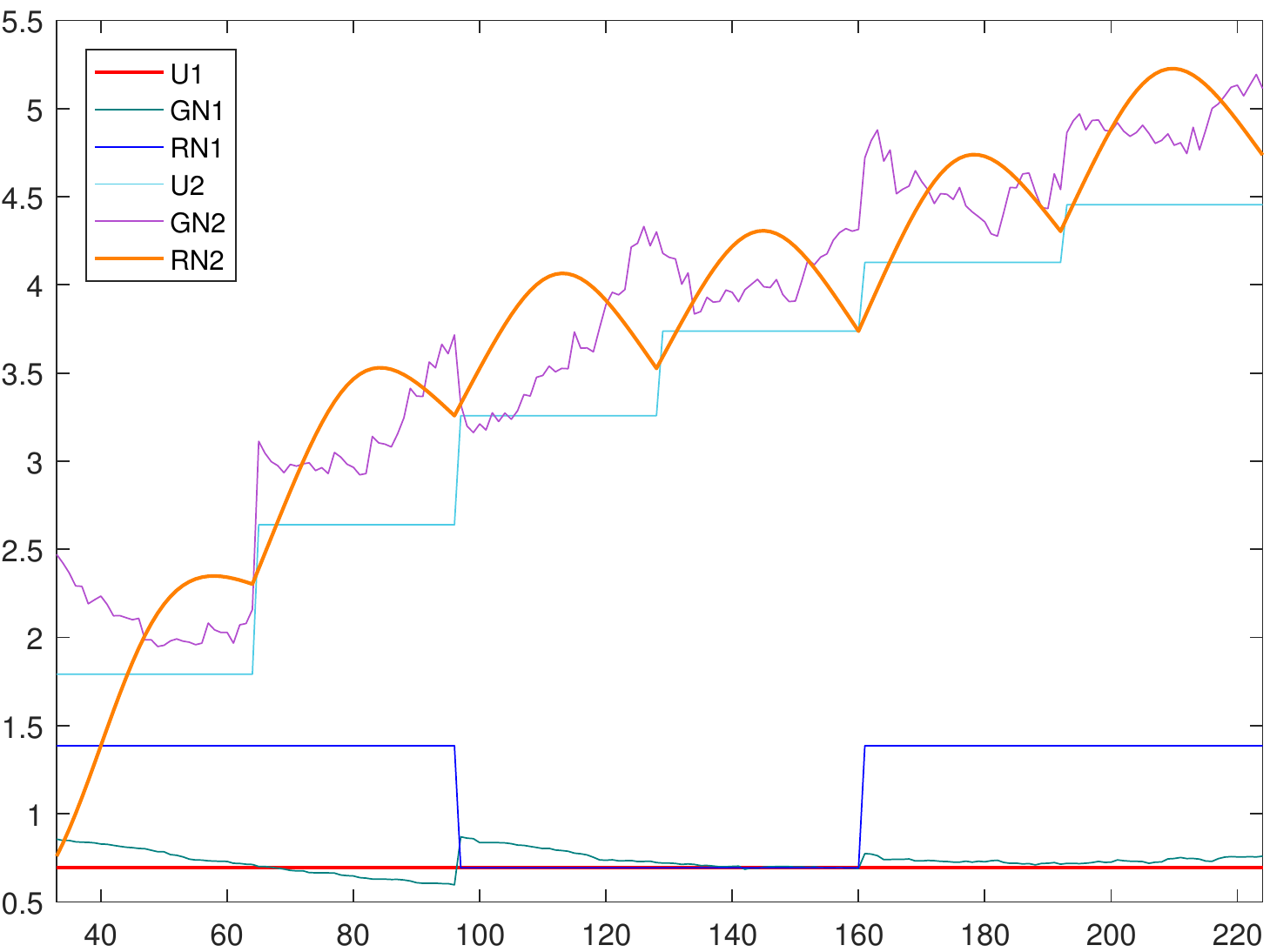}\\
	\caption{ Illustration of $\log Er(\mathbf{N},n)$ for U1, GN1, RN1, U2, GN2, RN2 respectively.}\label{figER64}
\end{figure*}
Therefore the square of the averaged error for GN2 is given by
\begin{align*}
	\varepsilon^2(\text{GN2},f,\mathbf{N})& = \frac{1}{2\pi}\int_{\T}\norm{f_{\tau}}_2^2d\tau - \frac{1}{4\pi^2}\int_{\T}d\tau \int_{\T}{f_{\tau}(t)}\overline{\mathcal{T}_\mathbf{N}f_{\tau}(t)}dt \\
	&\quad -\frac{1}{4\pi^2}\int_{\T}d\tau \int_{\T}\overline{f_{\tau}(t)}\mathcal{T}_\mathbf{N}f_{\tau}(t)dt +\frac{1}{2\pi}\int_{\T}\norm{\mathcal{T}_\mathbf{N}f_{\tau}}_2^2d\tau\\
	& =\sum_{n\notin I^\mathbf{N}} \abs{a(n)}^2 Er(\text{GN2},\mathbf{N},n)
\end{align*}
where
\begin{equation*}
	Er(\text{GN2},\mathbf{N},n) =1+\sum_{k=1}^{2m_0}\abs{\sum_{p=1}^{m_0} \left(e^{\i nt_p}w_{2p-1}(k)+\i n e^{\i nt_p}w_{2p}(k)\right) }^2.
\end{equation*}

\begin{table}[!t]
	\caption{Comparison  of several existing   interpolation methods.}\label{comparesampling}
	\centering
	\vspace{0.1cm}
	\begin{threeparttable}
		\begin{tabular}{p{3.6cm}|p{2.0cm}<{\centering}|p{2.1cm}<{\centering}|p{2.1cm}<{\centering}|p{2.4cm}<{\centering}}	
			\hline
			Different methods	& Untruncated implementation & Applicable to  nonuniform samples&Applicable to  multichannel samples& Closed form of interpolating functions   \\
			\hline
			Proposed method  & yes & yes & yes  & yes  \\
			\hline
			Single-channel interpolation  by FFT  \cite{selva2015fft} & yes &  yes\tnote{1} &  N/A  &  N/A \\
			\hline
			GSE   \cite{papoulis1977generalized,chengkou2018generalized} & N/A &  yes\tnote{2} & yes & yes \\
			\hline
			Classical	nonuniform sampling on real line   \cite{zayed1993advances,seip1987an} &  N/A &  yes&  N/A & N/A \\  \hline
			Single-channel	nonuniform trigonometric interpolation  \cite{margolis2008nonuniform} & yes &  yes&  N/A & yes \\
			\hline
		\end{tabular}
		\begin{tablenotes}
			\footnotesize
			\item[1] The nonuniform samples in \cite{selva2015fft}   have to be located in a regular grid .
			\item[2]  The distribution of nonuniform samples in GSE is  recurrent.
		\end{tablenotes}
	\end{threeparttable}
\end{table}

Similarly, we can get the averaged errors for GN1 and RN2 respectively  as
\begin{align*}
	\varepsilon^2(\text{GN1},f,\mathbf{N})&  =\sum_{n\notin I^\mathbf{N}} \abs{a(n)}^2
	Er(\text{GN1},\mathbf{N},n)\\
	\varepsilon^2(\text{RN2},f,\mathbf{N})& = \sum_{n\notin I^\mathbf{N}} \abs{a(n)}^2
	Er(\text{RN2},\mathbf{N},n)
\end{align*}
where
\begin{align*}
	Er(\text{GN1},\mathbf{N},n) =&1+\sum_{k=1}^{M}\abs{\sum_{p=1}^{M}  e^{\i nt_p} z_p(k)}^2\\
	Er(\text{RN2},\mathbf{N},n) =&1+ \abs{\frac{(2m_0+n-k_nm_0)e^{\i(k_n-1)m_0\alpha}-ne^{\i m_0 \alpha}}{2m_0+n-k_nm_0-(n+m_0-k_nm_0)e^{\i m_0 \alpha}}}^2\\
	&~~+\abs{\frac{n-(m_0+n-k_nm_0)e^{\i(k_n-1)m_0\alpha}}{2m_0+n-k_nm_0-(n+m_0-k_nm_0)e^{\i m_0 \alpha}}}^2
\end{align*}
with $k_n =\text{fix}\left( \frac{n-N_1}{m_0}\right) +1$ and $\text{fix}(x)$ rounds $x$  to the nearest integer toward zero. Note that the other interpolation formulas can be subsumed in the above three cases,
therefore we obtain all the averaged errors of six aforementioned formulas. The sequences  $Er(\text{U1},\mathbf{N},n)$, $Er(\text{GN1},\mathbf{N},n)$, ..., $Er(\text{RN2},\mathbf{N},n)$ are depicted graphically in Figure \ref{figER64}. Here $\mathbf{N}=(N_1,N_2)=(-31,32)$, thereby $m_0=32, M=64$.  For GN1 and GN2, the nonuniform grids are randomly generated by (\ref{rand1}) and (\ref{rand2}) respectively. The domain for each $Er(\mathbf{N},n)$ plotted  in Figure \ref{figER64} is set as $\{N_2+1\leq n\leq N_2+3\mu(I^{\mathbf{N}})\}$. The theoretical analysis of error is in accord with  the result of numerical examples and therefore the conclusions made in the previous subsection are underpinned.

The proposed interpolation method involves non-uniformly spaced multichannel samples. There are notable existing interpolation methods involving nonuniform or multichannel samples.  We provide the Table \ref{comparesampling}   to compare these existing results.
Among   the numerous sampling or interpolation methods , we  only present several typical types in Table \ref{comparesampling}. It is noted that  the representative references listed here are far from complete.


\section{Application to image recovery}\label{S6}
In the previous sections, we dealt with techniques for reconstructing a continuous function from different types of discrete samples. In this section, we introduce a simple application of the proposed interpolation formulas to image recovery. To begin, consider Figure \ref{FIGresimage10} (b), which is
severely degraded because of the damaged pixels. Suppose that the damaged pixels are non-uniformly located. The goal of this part is to recover the missing pixels via interpolation.

Note that  the proposed  formulas are  one-dimensional, we have to compute interpolation result for each row of image first, and then apply interpolation for each column by using the same operations. As the distantly separated  image regions are irrelevant virtually, we should treat the reconstruction problem locally. In the following, the test image is set to be Lena ($256\times 256$), and  it is degraded by wiping   out $43.5\%$ randomly selected pixels, see Figure \ref{FIGresimage10} (b).  Each row of image is divided into $32$ equal parts,  namely $8$ pixels per part. Repeating interpolation process through  the image pieces produced by dividing, we obtain   values for all the missing pixels.    Applying the same operations to each column,   we have another reconstructed result. It is noted that the dividing treatment has an additional benefit that  it makes computation complexity linear in the size of image.

It is natural to average two reconstructed results. Besides, we need to convert interpolation result into unsigned 8-bit integer type. A direct way for such a conversion is based on
\begin{equation*}
	{Z}(\mathcal{I}_{xy}) =\begin{cases}
		255&  \text{if} ~~ \mathcal{I}_{xy} \geq 255  \\
		0 & \text{if} ~~ \mathcal{I}_{xy} \leq 0\\
		\text{round}(\mathcal{I}_{xy}) & \text{if} ~~ 0<\mathcal{I}_{xy} <255
	\end{cases}
\end{equation*}
where $\mathcal{I}_{xy}$ is the intensity value at location $(x,y)$.
For a more elaborate conversion, we introduce a correction for the  values produced by interpolation.
Let $\Lambda_{xy}$ be the $3\times 3$ neighborhood centered on $(x,y)$, the correction is defined as
\begin{equation*}
	\hat{\mathcal{I}}_{xy} =\begin{cases}
		\max\left\lbrace \mathcal{I}_{x'y'}:(x',y')\in \Lambda_{xy}\backslash \{(x,y)\}\right\rbrace &  \text{if} ~~ \mathcal{I}_{xy} =\max\left\lbrace  \mathcal{I}_{x'y'}:(x',y')\in \Lambda_{xy}\right\rbrace   \\
		\min\left\lbrace \mathcal{I}_{x'y'}:(x',y')\in \Lambda_{xy}\backslash \{(x,y)\}\right\rbrace &  \text{if} ~~ \mathcal{I}_{xy} =\min\left\lbrace  \mathcal{I}_{x'y'}:(x',y')\in \Lambda_{xy}\right\rbrace   \\
		\mathcal{I}_{xy} & \text{otherwise}
	\end{cases}
\end{equation*}
where $\mathcal{I}_{xy}$ and $\hat{\mathcal{I}}_{xy}$ are the intensity values at location $(x,y)$  before and after correction  respectively. From the definition, this correction  is certain to be convergent after finite iterations. In practice, more fortunately, it can be convergent generally by $3$ or $4$ iterations.

Note that the damaged pixels can be also  viewed as impulse noise (also called salt-and-pepper noise) in an image. It is  known that the median filter, which is a very useful order-statistic filter in image processing, is particularly effective in the reduction of impulse noise \cite{petrou2010image,Gonzalez2006digitalim}.  Basically, to perform median filtering at $(x,y)$ is to determine the median for  values of the pixel in  $\Lambda_{xy}$ and assign that median to $(x,y)$ in the filtered image.

\begin{figure*}[!t]
	\centering
	\includegraphics[width=5.6 in]{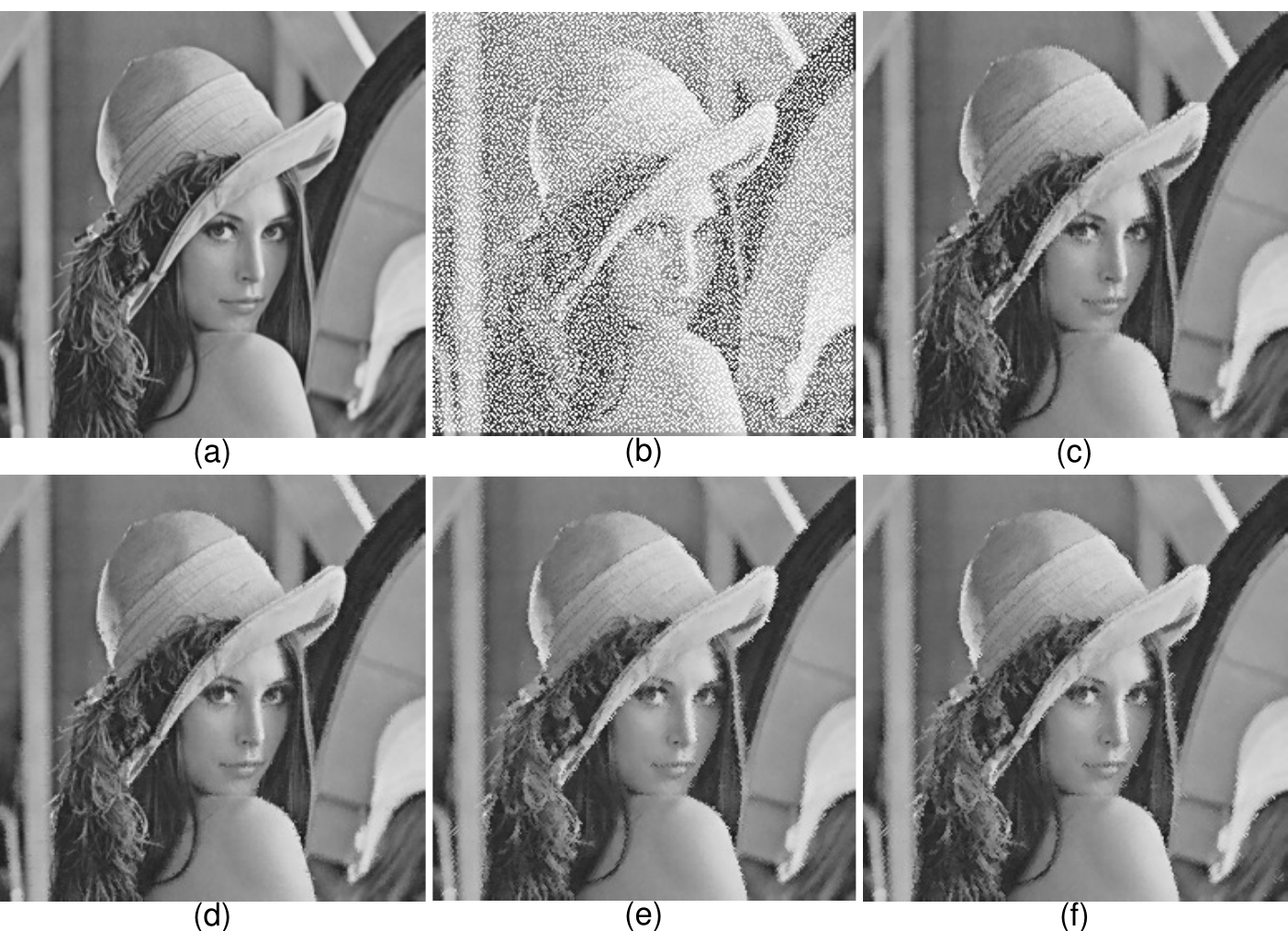}\\
	\caption{(a) Ideal original image Lena. (b) Degraded image (with $43.5\%$ pixels damaged).  (c) Reconstructed image by GN1 + CRT. (d) Reconstructed image by GN2 + CRT. (e) Reconstructed image by MED + CRT. (e) Reconstructed image by CRT + MED.}\label{FIGresimage10}
\end{figure*}

We are in position to compare the performance of interpolation method and median filtering in the problem of restoring damaged pixels.
Specifically, we consider three methods: GN1, GN2 and median filter (MED for short). In general, GN2 requires a prerequisite condition of differentiable since it  involves   derivative.  It would be stretching a point to describe  a digital image as a set of samples of a smooth (differentiable) function.   Nevertheless, the introduction of difference (also called derivative in some literature without ambiguity) for the original digital image  could help to preserve more useful information in the reconstructed image. The experimental results are shown in Figure \ref{FIGresimage10} and  Table \ref{qmforimageres}.
\begin{table}[!ht]
	\caption{Quantitative measurements for quality of image recovery results.}\label{qmforimageres}
	\centering
	\vspace{0.1cm}
	\begin{tabular}{l|c|c|c|c}	
		\hline
		& GN1 + CRT & GN2 + CRT & MED + CRT & CRT +MED \\
		\hline
		RMSE: $\delta$  & 0.0570 & \textbf{0.0488} & 0.0884  & 0.0875  \\
		\hline
		PSNR: $\rho$ &  30.56 &  \textbf{31.91} &  26.76 & 26.84 \\
		\hline
		CC : $\gamma$ & 0.9874 &  \textbf{0.9908}& 0.9719 & 0.9726 \\
		\hline
	\end{tabular}
\end{table}
Here CRT represents the correction operation.  We use relative mean square error (RMSE) $\delta$, peak signal to noise ratio (PSNR) $\rho$ and correlation coefficient (CC) $\gamma$, to measure the quality of reconstructed images. They are defined respectively as:
\begin{equation*}
	\delta(\mathcal{I},\mathcal{I}_{r}) =  \frac{\norm{\mathcal{I} -\mathcal{I}_{r}}_F}{\norm{\mathcal{I}}_F},
\end{equation*}
\begin{equation*}
	\rho(\mathcal{I},\mathcal{I}_{r}) = 10\log_{10}\left (\frac{255^2\times L_1\times L_2}{\norm{\mathcal{I} -\mathcal{I}_{r}}_F^2}\right),
\end{equation*}
\begin{equation*}
	\gamma ( \mathcal{I},\mathcal{I}_{r} )  = \frac{\sum_{i,j}(\mathcal{I}(i,j)-\mathcal{I}^0)(\mathcal{I}_{r}(i,j)-\mathcal{I}_{r}^0)}
	{\norm{\mathcal{I}-\mathcal{I}^0}_F \norm{\mathcal{I}_{r}-\mathcal{I}_{r}^0}_F},
\end{equation*}
where  $\mathcal{I} $, $\mathcal{I}_{r}$  denote  original   and reconstructed image respectively, $\mathcal{I}^0$, $\mathcal{I}_{r}^0$ denote their   averaged pixel values, and $\norm{\cdot}_F$ denotes Frobenius norm,  and $L_1$ and  $ L_2$ are the number of rows and columns of $\mathcal{I}$.

From Table \ref{qmforimageres}, we conclude that the interpolation-based method GN1 performs significantly better than the median filtering method. If there is  some information about gradient of original image available to  be utilized, the performance of   image recovery  can be improved further by  GN2. These conclusions are also reflected in Figure \ref{FIGresimage10} visually.

It is noted that we consider the image recovery problem only from the point where a digital image is degraded by simply wiping out some  pixel values. Besides,  the  material about recovery methods developed in this section is far from  exhaustive. Even so, the nonuniform-interpolation-based image recovery methods perform well and are easily implemented. It is conceivable that these methods could be integrated into some more comprehensive image recovery approaches. These further explorations, although of importance in image processing, are beyond the scope of this paper.

\section{Conclusion}\label{S7}

Several interpolation formulas associated with non-uniformly distributed data are presented. If the signal to be reconstructed is bandlimited, then it is possible to reconstruct the entire signal by sampling it with the total number of samples larger than the corresponding bandwidth. For the case of non-bandlimited signal, quantitative error analysis for reconstructing is also analyzed. It has been shown that the introducing derivative samples  of function can improve reconstruction result significantly. 
As an application, several nonuniform-interpolation-based algorithms for recovering a certain kind of corrupted images are demonstrated. The performance is satisfactory.


\end{document}